\documentclass{amsart}

\usepackage{amsmath}
\usepackage{amssymb}
\usepackage[active]{srcltx}
\usepackage{graphicx,color}%psfrag

\newtheorem{thm}{Theorem}[section]
\newtheorem{prop}[thm]{Proposition}
\newtheorem{lem}[thm]{Lemma}

\theoremstyle{definition}

\newtheorem{remark}[thm]{Remark}

\newcommand{\mat}[2]{\ensuremath{\left[\begin{array}{#1}#2\end{array} \right]}}

\newcommand{\ands}{\quad\mbox{and}\quad}

\newcommand{\sH}{{\mathcal H}}

\newcommand{\sE}{{\mathcal E}}

\newcommand{\sX}{{\mathcal X}}

\newcommand{\im }{{\mbox{ran\,}}}
%%%%%%%%%%%%%%%%%%%%%%%%%%%%%%%%%%%%%

\def\de{\Delta}

\def\g{\gamma}
\def\ga{\Gamma}

\def\l{\lambda}
\def\la{\Lambda}

\def\t{\tau}

\def\o{\omega}
\def\om{\Omega}

\def\tht{\Theta}

\def\z{\zeta}
\def\ts{\times}

\def\iy{\infty}
\def\im{{\rm Im\, }}

\def\kr{{\rm Ker\, }}

\def\BC{{\mathbb C}}
\def\BD{{\mathbb D}}

\def\BP{{\mathbb P}}

\def\BT{{\mathbb T}}

%%%%%%%%%%%%%%%%%%%%%%%

%%%%%%%%%%%%%%%%%%%%%%%

%\date{}

\begin{document}
\title[Maximum entropy solution to a suboptimal rational Leech problem]
{State space formulas for a suboptimal rational Leech problem I: Maximum entropy solution}

%---------Author 1
\author[A.E. Frazho]{A.E. Frazho}

\address{%
Department of Aeronautics and Astronautics, Purdue University\\
West Lafayette, IN 47907, USA}

\email{frazho@ecn.purdue.edu}

%---------Author 2

\author[S. ter Horst]{S. ter Horst}

\address{%
Unit for BMI, North-West University\\
Private Bag X6001-209, Potchefstroom 2520, South Africa}

\email{sanne.terhorst@nwu.ac.za}

%---------Author 3

\author[M.A. Kaashoek]{M.A. Kaashoek}

\address{%
Department of Mathematics,
VU University Amsterdam\\
De Boelelaan 1081a, 1081 HV Amsterdam, The Netherlands}

\email{m.a.kaashoek@vu.nl}

%------------classification, key words, date

\subjclass{Primary 47A57; Secondary 47A68, 93B15,  47A56}

\keywords{Leech problem, stable rational matrix functions, commutant lifting theorem, state space representations, algebraic Riccati equation}

\date{}

\begin{abstract}
For  the strictly positive case (the suboptimal case) the maximum entropy solution $X$ to the Leech problem $G(z)X(z)=K(z)$ and $\|X\|_\iy=\sup_{|z|\leq 1}\|X(z)\|\leq 1$, with $G$ and $K$ stable rational matrix functions, is proved to be a stable rational matrix function. An explicit state space realization for $X$ is given, and $\|X\|_\iy$ turns out to be  strictly less than one. The matrices involved in this realization are computed from the matrices appearing in a state space realization of the data functions $G$ and $K$.  A formula for the entropy of $X$ is also given.
\end{abstract}

\maketitle

\setcounter{section}{0}
\setcounter{equation}{0}

\section{Introduction}\label{intro}

Let  $G$  and $K$ be  matrix-valued $H^\iy$ functions on the open unit disc $\BD$ of sizes $m\ts p$ and $m\ts q$, respectively, and let  $T_G$ and $T_K$  denote  the  corresponding block lower triangular  Toeplitz operators,
\[
T_G:\ell^2_+(\BC^p)\to\ell^2_+(\BC^m), \quad T_K:\ell^2_+(\BC^q)\to\ell^2_+(\BC^m).
\]
A $p\ts q$ matrix-valued $H^\iy$ function $X$ is called a \emph{solution to the Leech problem associated with $G$ and $K$} whenever
\begin{equation}\label{Leech1}
G(z)X(z)= K(z)    \quad (z\in \BD)  \ands \|X\|_\iy=\sup_{z\in\BD}\|X(z)\|\leq 1.
\end{equation}
The Leech problem is an example of a metric constrained interpolation problem, the first part of \eqref{Leech1} is the interpolation condition, and the second part is the metric constraint.   In a note dating from 1971/1972, only published recently \cite{Lch}, see also \cite{LchCom}, Leech proved that  the problem is solvable if and only if the operator $T_GT_G^*-T_KT_K^*$ is nonnegative. Later  the Leech theorem was  derived as a corollary of more general results; see, e.g.,  \cite[page 107]{RR85},  \cite[Section VIII.6]{FF90}), and \cite[Section 4.7]{BW11}.

Now assume in addition that $G$ and $K$ are rational. In other words, assume that $G$ and $K$ are  stable rational matrix functions. In that case, if the Leech problem associated with $G$ and $K$  is solvable, one expects the problem to   have  a stable rational matrix   solution as well.  However, a priori this is not clear, and the existence of rational solutions  was  proved only recently in  \cite{Trent13} by reducing the problem to polynomials, in  \cite{tH13} by adapting the lurking isometry method used in \cite{BT98}, and in \cite{FtHK13} by using a state space approach.

In  the  present paper    $G$ and $K$ are also stable rational matrix functions. We assume additionally that  the operator $T_GT_G^*-T_KT_K^* $ is   strictly positive. It is then known from commutant lifting theory    that the  Leech problem  has  a unique maximum entropy solution, that is, the (unique) solution $X$ to the Leech problem associated with  $G$ and $K$ for which the quantity
\begin{equation}\label{ent0}
\mathcal{E}(X) = \frac{1}{2\pi} \int_0^{2\pi}
\ln \det[ I_q - X(e^{\imath \omega})^*X(e^{\imath \omega})] d \omega
\end{equation}
is maximal.  In this paper we show that this  maximum entropy solution is   a stable rational   matrix function,  we derive an explicit formula for this  solution  and a formula for its entropy $\mathcal{E}(X)$; see Theorem \ref{mainthm} below.  When $T_GT_G^*-T_KT_K^* $ is only non-negative, the maximum entropy solution still exists  but the problem whether or not it is rational remains open.

To prove the above mentioned results,  we use  the fact,  well-known from mathematical systems theory  (see, e.g., Chapter 1 of \cite{CF03} or Chapter 4 in \cite{BGKR08}),    that rational matrix functions admit state space realizations. For  our $G$ and $K$   this means that  the matrix function $\begin{bmatrix}G & K \end{bmatrix}$ admits a representation of the following form:
\begin{equation}
\label{realGK}
\begin{bmatrix}
  G(z)  & K(z)
\end{bmatrix}
 = \begin{bmatrix}
  D_1   & D_2
\end{bmatrix}  + z C(I_n -  z A)^{-1}\begin{bmatrix}
  B_1   & B_2
\end{bmatrix} .
\end{equation}
Here $I_n$ is the   $n\ts n$ identity matrix,   $A$ is  an $n\ts n$ matrix, and $B_1$, $B_2$, $C$, $D_1$ and $D_2$ are matrices of appropriate sizes. Moreover, since $G$ and $K$ are stable rational matrix  functions,  $G$ and $K$ have no pole in the closed unit disc, and therefore we may assume that  matrix $A$ is \emph{stable}, that is, $A$ has all its eigenvalues in the open unit disc. The realization \eqref{realGK} is called \emph{minimal} if  there exists no realization of $\begin{bmatrix}G & K \end{bmatrix}$ as in \eqref{realGK} with `state matrix' $A$   of smaller size than the one in the given realization. In that case the order $n$ of $A$ is called the \emph{McMillan degree} of  $\begin{bmatrix}G & K \end{bmatrix}$. If  the realization \eqref{realGK} is minimal, then the matrix $A$ is automatically stable and the observability operator $W_{obs}$, which is defined by
\begin{equation}\label{defWobs}
W_{obs}=\begin{bmatrix}
C  \\
C A  \\
C A^2 \\
\vdots
\end{bmatrix}:\BC^n\to \ell_+^2(\BC^m),
\end{equation}
is one-to-one. In the sequel we do not require the realization \eqref{realGK} to be minimal but  we shall always assume that $A$ is stable and $W_{obs}$ is one-to-one.  In that case we refer to \eqref{realGK} as an \emph{observable stable realization}.

As a first step towards our main result we first derive, in Theorem \ref{thmpos} below, a necessary and sufficient condition for  $T_GT_G^*-T_KT_K^* $ to be strictly positive in terms of the matrices in \eqref{realGK} and related matrices. To do this we need  the rational ${m\ts m}$ matrix function
\begin{equation}
\label{RGK}
R(z)=G(z)G^*(z)  -K(z)K^*(z) .
\end{equation}
Here $G^*(z) =G(\bar{z}^{-1})^*$  and $K^*(z) =K(\bar{z}^{-1})^*$.  Note that $R$ has no pole on the unit circle $\BT$.  By $T_R$ we denote the Toeplitz operator defined by $R$. Using the realization \eqref{realGK} one shows (see \cite[Lemma 3.1]{FtHK13}) that  $R$  admits the following state space representation:
\begin{equation}\label{eqR}
R(z) = zC(I - zA)^{-1}\ga + R_0 + \ga^*(zI - A^*)^{-1}C^*.
\end{equation}
Here  $R_0$ and $\ga$ are matrices of sizes  $m\ts m$ and $n\ts m$, respectively, defined by
\begin{align}
R_0 &= D_1D_1^* -D_2D_2^* +C(P_1 - P_2)C^*,\label{defR0}\\[.1cm]
 \ga &= B_1D_1^* -B_2D_2^* + A(P_1 - P_2)C^*,  \label{defGa}
  \end{align}
and $P_1$ and $P_2$ are the unique $n\ts n$ matrix solutions of   the symmetric  Stein equations:
\begin{equation}\label{p1p2}
P_1-AP_1A^*=B_1B_1^*\quad\mbox{and}\quad P_2-AP_2A^*=B_2B_2^*.
\end{equation}
Since $A$ is stable, the above equations are solvable and the solutions are unique. Finally, we associate   with $R$  the  algebraic Riccati equation:
\begin{equation}\label{are}
Q  =  A^* Q  A + ( C - \Gamma^* Q  A )^*
( R_0 - \Gamma^* Q  \Gamma )^{-1} ( C - \Gamma^* Q  A ).
\end{equation}
We are now ready to state our main results.

\begin{thm}\label{thmpos}
Let $G$ and $K$ be stable rational matrix functions, and assume that $\begin{bmatrix} G&K\end{bmatrix}$ is given by the observable stable realization \eqref{realGK}. Then the operator $T_G T_G^* - T_K T_K^*$ is strictly positive if and only if
the following two conditions hold.
\begin{itemize}
\item[\textup{(i)}] There exists a strictly positive $n\times n$ matrix $Q$ such that
 \begin{itemize}
\item[\textup{(a)}] $R_0 - \Gamma^* Q  \Gamma$  is strictly positive,
\item[\textup{(b)}] $Q $ satisfies the Riccati equation \eqref{are},
\item[\textup{(c)}] the matrix
 $A_{0} = A-\ga( R_0 - \Gamma^* Q  \Gamma )^{-1} ( C - \Gamma^* Q  A )$ is stable.
\end{itemize}
\vspace{.1cm}
\item[\textup{(ii)}] The   operator
$Q^{-1} + P_2 - P_1$ is strictly positive.
\end{itemize}
In this case, the Toeplitz operator $T_R$ is strictly positive and  the inverse of the operator $T_G T_G^* - T_K T_K^*$   is given by
\begin{align}
&\hspace{1cm}\Big(T_GT_G^*-T_KT_K^*\Big)^{-1} =
T_R^{-1} + T_R^{-1}W_{obs}\om W_{obs}^* T_R^{-1}, \ \mbox{where}
\nonumber \\
&\om = (P_1-P_2)(Q^{-1}  + P_2-P_1)^{-1}Q^{-1}= (P_1-P_2)\big(I+Q(P_2-P_1)\big)^{-1}.  \label{defOm}
\end{align}
\end{thm}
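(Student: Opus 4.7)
The starting point is the finite-rank reduction
\begin{equation*}
T_G T_G^* - T_K T_K^* = T_R - W_{obs}(P_1 - P_2) W_{obs}^*,
\end{equation*}
which I would establish first by a direct block-entry calculation from the realization \eqref{realGK} and the Stein equations \eqref{p1p2}. Concretely, for any stable rational $F(z) = D + zC(I-zA)^{-1}B$ sharing the pair $(C,A)$ with $G$ and $K$, one checks $T_{FF^*} - T_F T_F^* = W_{obs}\, P\, W_{obs}^*$, where $P - APA^* = BB^*$. Applying this to $F = G$ and $F = K$ and using $T_R = T_{GG^*} - T_{KK^*}$ yields the identity and exhibits $T_G T_G^* - T_K T_K^*$ as an at-most-$n$-dimensional perturbation of the Toeplitz operator $T_R$.

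Next I would handle $T_R$ on its own. Since $R$ is continuous and self-adjoint on $\BT$, $T_R\gg 0$ is equivalent to pointwise strict positivity $R(e^{\imath\omega}) \gg 0$ on $\BT$. Using the representation \eqref{eqR}, the discrete-time Kalman--Yakubovich--Popov / discrete algebraic Riccati theory applied to the realization $(A,\ga,C,R_0)$ of $R$ gives that this pointwise positivity is equivalent to the existence of a strictly positive $Q$ satisfying (i)(a), (i)(b), and (i)(c); in that case one obtains a canonical outer spectral factor $L$ with stable inverse such that $R = L L^*$, so that $T_R = T_L T_L^*$ is invertible and $T_R^{-1} = T_{L^{-1}} T_{L^{-1}}^*$.

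The main technical step, and the one I expect to be the principal obstacle, is the key identity
\begin{equation*}
W_{obs}^* T_R^{-1} W_{obs} = Q.
\end{equation*}
Using the explicit state-space realization of $L^{-1}$, whose state matrix is precisely $A_0$ from (i)(c), I would compute $T_{L^{-1}}^* W_{obs}$ block-by-block; a geometric series in $A_0$ together with the Riccati equation read as a Lyapunov-type identity satisfied by $Q$ then collapses $(T_{L^{-1}}^* W_{obs})^*(T_{L^{-1}}^* W_{obs})$ to exactly $Q$. This is where the whole of condition (i) is used in concert.

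With $T_R \gg 0$ and the key identity in hand, I would finish by a Sherman--Morrison--Woodbury argument applied to the finite-rank reduction above. Setting $V = T_R^{-1/2} W_{obs}$, so that $V^* V = Q$, strict positivity of $T_R - W_{obs}(P_1 - P_2)W_{obs}^*$ amounts to $I - V(P_1 - P_2)V^* \gg 0$; since the nonzero eigenvalues of $V(P_1 - P_2)V^*$ coincide with those of $(P_1 - P_2)V^*V = (P_1 - P_2)Q$, this reduces to $Q^{-1} + P_2 - P_1 \gg 0$, which is (ii). The Woodbury formula then produces
\begin{equation*}
\bigl(T_R - W_{obs}(P_1 - P_2)W_{obs}^*\bigr)^{-1} = T_R^{-1} + T_R^{-1} W_{obs}\, \om\, W_{obs}^* T_R^{-1},
\end{equation*}
with $\om = (P_1 - P_2)\bigl(I - Q(P_1 - P_2)\bigr)^{-1}$, and both expressions for $\om$ in \eqref{defOm} follow from the algebraic rewrite $I - Q(P_1-P_2) = Q(Q^{-1} + P_2 - P_1)$. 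For the converse, strict positivity of $T_G T_G^* - T_K T_K^*$ forces the essential spectrum of $T_R$ to be bounded below by a positive constant, so $R \gg 0$ on $\BT$; this produces $Q$ from the Riccati side, and the Woodbury equivalence then reverses to deliver (ii).
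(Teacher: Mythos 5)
Your proposal follows essentially the same route as the paper's. The finite-rank identity $T_GT_G^*-T_KT_K^*=T_R+W_{obs}(P_2-P_1)W_{obs}^*$, proved via the Hankel products $H_GH_G^*=W_{obs}P_1W_{obs}^*$ and $H_KH_K^*=W_{obs}P_2W_{obs}^*$, is exactly Lemma \ref{lempos2}; the conversion of strict positivity of $T_R$ into condition (i) together with the key identity $Q=W_{obs}^*T_R^{-1}W_{obs}$ is exactly what the paper invokes in Remark \ref{remQ} (it cites this from the DARE literature rather than reproving it, as you propose to do via the spectral factor); and the finish is the same Woodbury-type reduction, namely Lemma \ref{leminv}. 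The only genuine variations are in how sub-steps are justified: the paper gets $R(e^{\imath\omega})\geq\eta I_m$ from $T_GT_G^*-T_KT_K^*\geq \eta I$ by testing against the kernels $\varphi_{z,m}$ and letting $z\to\BT$, rather than through the essential spectrum (both work); and Lemma \ref{leminv} proves the equivalence of $M\gg0$ with $Q^{-1}+N\gg0$ by decomposing $\ell_+^2(\BC^m)$ as $\kr W_{obs}^*\oplus\,\im W_{obs}$ rather than via your $\sigma(AB)\setminus\{0\}=\sigma(BA)\setminus\{0\}$ argument, which is an interchangeable device.

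One caution on a detail: with the paper's lower-triangular conventions, a factorization $R=LL^*$ with $L$ outer gives $T_R=T_LT_L^*+H_LH_L^*$, not $T_R=T_LT_L^*$; to write $T_R$ as a product of an invertible Toeplitz operator and its adjoint you need the factorization in the opposite order, $R=\Psi^*\Psi$ with $\Psi$ invertible outer, whence $T_R=T_\Psi^*T_\Psi$. The slip is harmless here because the invertibility of $T_R$ follows more cheaply anyway: once $R(e^{\imath\omega})\geq\eta I_m$ on $\BT$, the Toeplitz operator of the pointwise nonnegative symbol $R-\eta I_m$ is nonnegative, so $T_R\geq\eta I$ directly, which is how Lemma \ref{lempos1} concludes.
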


The second main result shows that the maximum entropy solution is rational and provides a state space realization for this solution.

\begin{thm}\label{mainthm}
Let $G$ and $K$ be stable rational matrix functions, and assume that $\begin{bmatrix} G&K\end{bmatrix}$ is given by the observable stable realization \eqref{realGK}. Furthermore, assume that $T_G T_G^* - T_K T_K^*$ is strictly positive, or equivalently, that items $(i)$ and $(ii)$ of  \textup{Theorem \ref{thmpos}} hold. Then the maximal entropy solution $X$   to the Leech problem for $G$ and $K$  is a stable rational matrix function which is given by the following state space realization:
\begin{equation}\label{realx0}
X(z) = D_U D_V^{-1} +z \left( C_1 - D_UD_V^{-1} C_2\right)(I - z A^\times)^{-1} B_0D_V^{-1}.
\end{equation}
Here, using the matrices appearing in Theorem \ref{thmpos},  the  matrices in \eqref{realx0} are defined by
\begin{align}
{\de}& =  R_0 - \Gamma^* Q  \Gamma, \quad C_0= {\de^{-1}} ( C - \Gamma^* Q  A ), \quad A_0=A-\ga C_0; \nonumber\\
C_{j}& =  D_j^* C_0   + B_j^* Q A_0, \quad (j=1,2);\label{defC12}\\
{D_0} & = {\de^{-1}}(D_2 - \Gamma^*Q B_2) + C_0\om C_2^*;\label{defD0}\\
B_0& = B_2   - \ga\de^{-1}(D_2 - \ga^*Q B_2) +  A_0 \om C_2^*;\label{defB0}\\
D_{U} & =  D_1^* {D_0} + B_1^* Q B_0, \quad D_{V}  = I_q + D_2^* {D_0} + B_2^* Q B_0; \label{defDUDV}\\
A^\times & = A_0 - B_0 D_V^{-1} C_2.\nonumber
\end{align}
Moreover, the state   matrix $A^\times$ is stable, the matrix $D_V$ is strictly positive, and the entropy of  $X$ is given by
\begin{equation}\label{entx}
 \mathcal{E}(X) = -\ln \det[D_{V}].
\end{equation}
Finally, $\|X\|_\iy=\sup_{|z|\leq 1}\|X(z)\|$ is strictly less than one, and the McMillan degree of $X$ is less than or equal to the McMillan degree of $\begin{bmatrix}  G & K   \end{bmatrix}$.
\end{thm}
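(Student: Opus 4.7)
The strategy is to combine the characterization of the maximum entropy solution from commutant lifting theory with the explicit inverse formula for $T_GT_G^*-T_KT_K^*$ established in Theorem \ref{thmpos}. Abstractly, the maximum entropy (central) solution $X$ is obtained by setting the free Schur parameter equal to zero in the Redheffer-type linear-fractional parameterization of all solutions to the Leech problem. Concretely, this yields a representation $X = U V^{-1}$ in which $U$ and $V$ are stable analytic matrix functions manufactured from a spectral factor of $(T_GT_G^*-T_KT_K^*)^{-1}$ together with $T_G$ and $T_K$. Rationality of $X$ and the explicit formulas \eqref{realx0}--\eqref{defDUDV} will then come from converting this construction into state space realizations.

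A preliminary step is to read Theorem \ref{thmpos} as a state space spectral factorization. The algebraic Riccati equation \eqref{are}, together with stability of $A_0$ in Theorem \ref{thmpos}(i)(c), is precisely the state space form of spectral factorization of $R$: it produces a stable outer factor of $R$ with realization data $(A_0,\gamma,C_0,\delta^{1/2})$. The correction term $\omega$ from \eqref{defOm} then supplies the additional structure needed to build a spectral factor of $T_GT_G^*-T_KT_K^*$ itself, which is the ingredient entering the central formula for $X$.

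The second and most delicate step is to carry out the state space computation of $X = U V^{-1}$. Substituting the realization \eqref{realGK} of $[G\ K]$ and the spectral factor above into the abstract central formula produces an initial realization of $X$ whose state matrix lives in a space of dimension up to $2n$. Using the Stein equations \eqref{p1p2}, the Riccati identity \eqref{are}, and the definition of $\omega$, one performs a similarity transformation followed by cancellation of an invariant subspace of dimension $n$; this collapses the realization to state dimension $n$ and produces the state matrix $A^\times = A_0 - B_0 D_V^{-1} C_2$. The coupling, input, and feedthrough matrices emerge in exactly the combinations displayed in \eqref{defC12}--\eqref{defDUDV}.

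Finally, the remaining claims are handled as follows. Stability of $A^\times$ is verified by a Stein/Lyapunov certificate assembled from $Q$ and \eqref{are}. Strict positivity of $D_V$ is read off from the construction, where $D_V$ appears as a positive feedthrough in the outer factorization of $I-T_X^*T_X$. The entropy identity $\mathcal{E}(X)=-\ln\det D_V$ is derived from the standard maximum entropy formula of commutant lifting theory by applying Jensen's formula to this outer factorization; in the present state space setting it evaluates precisely to $-\ln\det D_V$. The strict inequality $\|X\|_\iy<1$ follows because $X$ is continuous on the closed unit disc (since $A^\times$ is stable), so its sup-norm is attained, and strict positivity of $T_GT_G^*-T_KT_K^*$ prevents the maximum from reaching one. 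The McMillan degree bound is immediate from \eqref{realx0}. The principal obstacle will be the second step: careful matrix algebra, using \eqref{are}, \eqref{p1p2} and \eqref{defOm} in concert, to verify that the naive $2n$-dimensional realization of the abstract central solution indeed collapses to order $n$ in precisely the way prescribed by \eqref{defC12}--\eqref{defDUDV}.
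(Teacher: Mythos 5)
Your outline follows the same broad architecture as the paper (commutant lifting central solution $X=UV^{-1}$, then a state space conversion using the data of Theorem \ref{thmpos}), but there is a genuine gap: the computational core of the theorem is never carried out. The statement being proved is precisely that the central solution's realization is given by the \emph{specific} matrices $C_1,C_2,D_0,B_0,D_U,D_V,A^\times$ in \eqref{defC12}--\eqref{defDUDV}, and you defer exactly this ("the principal obstacle will be the second step"). Your proposed route --- build a spectral factor of $T_GT_G^*-T_KT_K^*$, form a $2n$-dimensional product realization of $UV^{-1}$, and cancel an $n$-dimensional invariant subspace --- is plausible in principle but is not the paper's route and is not executed. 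The paper instead never forms a $2n$-dimensional realization: it computes $\Xi=(T_GT_G^*-T_KT_K^*)^{-1}T_KE_q$ explicitly using the inversion formula \eqref{defOm} and the Schur-complement form of $T_R^{-1}$, proves $T_R^{-1}W_{obs}=W_0$ (the observability operator of $(C_0,A_0)$), and then uses the intertwining $FW_0=W_0A^\times$ to compress the \emph{infinite-dimensional} realization of Proposition \ref{prop_real} directly down to state dimension $n$. Without some version of these identities (in particular $S_m^*\Xi=W_0B_0$, $E_p^*T_G^*W_0=C_1$, $E_q^*T_K^*W_0=C_2$), nothing in your argument pins down the claimed formulas.

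Two of your auxiliary arguments are also under-supported. For stability of $A^\times$ you invoke "a Stein/Lyapunov certificate assembled from $Q$ and \eqref{are}" but exhibit none; the paper's actual argument is indirect: $V(z)=D_V+zC_2(I-zA_0)^{-1}B_0$ with $A_0$ stable, $V^{-1}\in H^\infty_{q\times q}$ by Theorem \ref{thcl}, and then the state space inversion theorem forces $A^\times=A_0-B_0D_V^{-1}C_2$ to be stable. For $\|X\|_\infty<1$, your assertion that "strict positivity of $T_GT_G^*-T_KT_K^*$ prevents the maximum from reaching one" is not an argument; the mechanism is that $\theta=V(0)^{1/2}V^{-1}$ is an \emph{invertible} outer spectral factor of $I-X^*X$ (both $V$ and $V^{-1}$ lie in $H^\infty_{q\times q}$), so $I-X(\zeta)^*X(\zeta)$ is uniformly strictly positive on $\BT$. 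These pieces need to be supplied before the proposal constitutes a proof.
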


A description  of all solutions to the Leech problem \eqref{Leech1} for the case when $G$ and $K$ are rational will be the topic of a future publication.

\begin{remark}\label{remQ}
An $n\times n$ matrix $Q$ is said to be  a \emph{stabilizing solution} of  the algebraic Riccati equation \eqref{are} whenever  $Q$  satisfies the three conditions (a), (b) and (c) in item (i) of Theorem \ref{thmpos}. In this case  $Q$ is uniquely determined, cf., formula \eqref{QTRinv} below. Moreover, the existence  of a stabilizing solution of \eqref{are} is equivalent to the Toeplitz operator $T_R$ being strictly positive. In that case, the stabilizing matrix $Q$ is given by
\begin{equation}\label{QTRinv}
Q =W_{obs}^* T_R^{-1}W_{obs}.
\end{equation}
See, e.g., \cite[Section 10.3]{FB10}, \cite[Section 14.7]{BGKR2}, and \cite[Theorem 1.1]{FKR1} for a non-symmetric version. Also note that there exist several efficient numerical  algorithms  to compute a stabilizing solution, cf., \cite{AL84}.
\end{remark}

The special case of Leech's theorem with $q=m$ and $K$ identically equal to the $m\ts m$ identity matrix $I_{m}$ is part of the corona theorem, which is due to Carlson \cite{Carl62}, for $m=1$, and Fuhrmann \cite{Fuhr68} for arbitrary $m$. The least squares solution of  the corona version of the equation can be found in \cite{FKR2a} and a description of all solutions without any norm constraint in \cite{FKR2b}. For an engineering perspective on   corona and Leech type problems and related applications in signal processing we refer to \cite{WBP09,WB12} and the references therein.

The paper consists of  five sections including the present introduction. In Section \ref{CLT}  we recall the basic results from commutant lifting theory used in the present paper, and we specify these results for the Leech problem.  In Section \ref{infmodel} we assume that  $T_GT_G^* - T_K T_K^*$ is strictly positive and, using the commutant lifting results, we derive an infinite dimensional state space realization for the maximum entropy solution.  These two sections do not require $G$ and $K$ to be rational; the next two sections do.  In Section \ref{secprthmQ} we further clarify   the role of the Toeplitz operator $T_R$ with $R$ being given by \eqref{RGK} and prove Theorem~\ref{thmpos}. The proof of Theorem~\ref{mainthm} is given in the final section.  {At the end of the final section we present a direct proof   of the fact that the function $X$ given by \eqref{realx0} satisfies the first identity in \eqref{Leech1} (see Remark \ref{rem53}).}

\medskip
\noindent\textbf{Some terminology and notation.} For any positive integer $k$ we write $E_k $ for the canonical embedding of $\BC^k$ onto the first coordinate space of $\ell_+^2(\BC^k)$, that is,
\[
E_k  = \begin{bmatrix}
           I_k & 0 & 0 & 0 & \cdots \,\,\\
         \end{bmatrix}{}^\top:\mathbb{C}^k \rightarrow \ell_+^2(\mathbb{C}^k).
\]
Here $\ell^2_+(\BC^k)$ denotes the Hilbert space of unilateral square summable sequences of vectors in $\BC^k$. By $S_k$ we denote  the unilateral shift on $\ell_+^2(\mathbb{C}^k)$. For positive integers $k$ and $r$ we write $H_{k\ts r}^\iy$ for the Banach space of  all $k\ts r$ matrices with entries from $H^\iy$, the algebra of all bounded analytic functions of the open unit disc $\BD$.  As usual, we identify a $k\ts r$ matrix with complex entries with the linear operator from $\BC^r$ to $\BC^k$ induced  by the action of the matrix on the standard bases. By definition, the infinity norm of   $F\in H_{k\ts r}^\iy$ is given by  $\|F\|_\iy=\sup_{|z|<1} \|F(z)\|$. A function  $F\in H_{k\ts r}^\iy$ is said to be  \emph{outer} if the Toeplitz operator $T_F$ from $\ell_+^2(\BC^r)$ to $\ell_+^2(\BC^k)$  defined by $F$  has a dense range. We call $F\in H_{k\ts k}^\iy$ \emph{invertible outer} if $\det F(z)\not =0$ for each $z\in \BD$ and $F^{-1}$ belongs to $H_{k\ts k}^\iy$. Thus  $F\in H_{k\ts k}^\iy$ is invertible outer if and only if $T_F$ is invertible, and in that case $T_F^{-1}=T_{F^{-1}}$.

\setcounter{equation}{0}
\section{The central commutant lifting solution}\label{CLT}

In this section we recall the construction of the  central solution in the Sz.-Nagy-Foias commutant lifting theorem, as presented in Chapter IV of \cite{FFGK} with  the bound  $\g$ equal to one. Note that in this setting, by \cite[Theorem  IV.7.5]{FFGK}, the central solution is equal to the maximum entropy solution (see Theorem \ref{thc2} below).

\begin{thm}[Commutant lifting] \label{thcl}
Let $\sH^\prime$ be an invariant subspace for the backward shift $S_p^*$ on $\ell_+^2(\mathbb{C}^p)$ and $T^\prime$ the operator on $\sH^\prime$ obtained by compressing $S_p$ to $\sH^\prime$, that is, $T^\prime = P_{\sH^\prime} S_p |\sH^\prime$. Let $\la$ be a contraction mapping $\ell_+^2(\mathbb{C}^q)$  into $\sH^\prime$ satisfying $T^\prime \la = \la S_q$. Then there exists a function $X$ in $H_{p\times q}^\infty$ such that
\begin{equation}\label{clt2}
\la = P_{\sH^\prime} T_X
\quad \mbox{and}\quad \|X\|_\infty \leq 1.
\end{equation}
Moreover, if $\|\la\| < 1$, then  a function $X$ in $H_{p\times q}^\infty$ satisfying $\la = P_{\sH^\prime} T_X$ and $\|X\|_\infty \leq 1$ is given by
\begin{align}\label{clt00}
X(z) &=  U(z)V(z)^{-1}, \nonumber\\
U(z) &= E_p^* \left(I - z S_p^*\right)^{-1} \la\left(I - \la^* \la  \right)^{-1}  E_q, \nonumber \\
V(z) &= E_q^* \left(I - z S_q^*\right)^{-1}  \left(I - \la^* \la  \right)^{-1} E_q.
\end{align}
Moreover, $\det V(z)\not = 0$ for $| z|<1$, the function $V^{-1}$ belongs to  $H_{q\times q}^\infty$ and is  an outer function. In fact,  the function $\tht=V(0)^{1/2} V^{-1}$ is the outer  spectral factor of  the function $I-X^*X$, that is
\begin{equation}
\label{outerfact}
I-X(\z)^*X(\z)=\tht(\z)^* \tht(\z), \quad \z\in \BT \ \mbox{  a.e.}.
\end{equation}
\end{thm}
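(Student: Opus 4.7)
The plan is to reduce the statement to the Sz.-Nagy-Foias commutant lifting theorem together with the central intertwining lifting construction, both developed in Chapter IV of \cite{FFGK}. The shift $S_p$ on $\ell_+^2(\BC^p)$ is the minimal isometric lifting of $T^\prime=P_{\sH^\prime}S_p|\sH^\prime$, and by hypothesis $T^\prime\la=\la S_q$ (equivalently $S_p^*\la=\la S_q^*$). The classical commutant lifting theorem then yields a contraction $Y:\ell_+^2(\BC^q)\to\ell_+^2(\BC^p)$ with $S_p Y=Y S_q$ and $P_{\sH^\prime}Y=\la$. Since $Y$ intertwines the unilateral shifts, it is a block Toeplitz operator $T_X$ for some $X\in H_{p\ts q}^\iy$ with $\|X\|_\iy=\|T_X\|\le 1$, settling \eqref{clt2}.

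For the explicit formula \eqref{clt00} under the strict contraction hypothesis $\|\la\|<1$, I would invoke the central intertwining lifting of \cite{FFGK}: since $I-\la^*\la$ is boundedly invertible, the central choice of free Schur parameters produces a closed-form $Y=T_X$ with $X=UV^{-1}$ as stated. To match this with \eqref{clt00}, I would verify directly that the so-defined $X$ is a Schur function and that $\la=P_{\sH^\prime}T_X$, reading off the Taylor coefficients of $U$ and $V$ from the generating function $E_j^*(I-zS_j^*)^{-1}=\begin{bmatrix}I_j & zI_j & z^2I_j & \cdots\end{bmatrix}$ (for $j\in\{p,q\}$) and using boundedness of $(I-\la^*\la)^{-1}$ to place $U$ and $V$ in $H^\iy$. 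For $V^{-1}\in H_{q\ts q}^\iy$ and $\det V(z)\ne 0$ on $\BD$, a state space realization of $V$ is visible from the second line of \eqref{clt00}: its state operator is a stable compression of $S_q^*$ and its input operator is the bounded map $(I-\la^*\la)^{-1}E_q$, so inverting the realization exhibits $V^{-1}$ as an $H^\iy$ function.

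The main obstacle is the spectral factorization identity \eqref{outerfact}. Multiplying through by $V^*V$ and using $\tht=V(0)^{1/2}V^{-1}$, it is equivalent to
\[
V(\z)^*V(\z)-U(\z)^*U(\z)=V(0), \qquad \z\in\BT.
\]
I would establish this by expanding both sides via the Fourier representations of $U$ and $V$ coming from \eqref{clt00} and collapsing the resulting double sums using the intertwining $S_p^*\la=\la S_q^*$ together with the identity $\la(I-\la^*\la)^{-1}=(I-\la\la^*)^{-1}\la$. The delicate bookkeeping of the two defect operators $I-\la^*\la$ and $I-\la\la^*$ is where the bulk of the work lies; ultimately this identity reflects the fact that the central lifting is the unique one for which the Schur complement in the associated $2\ts 2$ Redheffer matrix is exactly $V(0)^{1/2}$, from which outerness of $\tht$ follows because $V^{-1}$ is outer.
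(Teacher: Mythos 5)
The paper does not actually prove this theorem: it is recalled from the literature, the formulas \eqref{clt00} and the identity \eqref{outerfact} being quoted from \cite[Theorem IV.6.6]{FFGK} with $\g=1$ and $A=\la$. Your first paragraph --- existence via the commutant lifting theorem together with the fact that an operator intertwining $S_q$ with $S_p$ is a block lower triangular Toeplitz operator $T_X$ with $\|X\|_\iy=\|T_X\|$ --- is correct and standard (note only that $S_p$ is an isometric lifting of $T'$ but in general not the \emph{minimal} one; this is harmless since the lifting theorem applies to arbitrary isometric liftings). Insofar as your second paragraph simply invokes the central intertwining lifting of \cite{FFGK}, you are doing exactly what the paper does.

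The parts of your sketch that go beyond the citation, however, contain genuine gaps. First, $T^\prime\la=\la S_q$ is \emph{not} equivalent to $S_p^*\la=\la S_q^*$: already for $\sH^\prime=\ell_+^2(\BC^p)$ the hypothesis says $\la=T_X$ is an analytic Toeplitz operator, and $S_p^*T_XE_q\neq T_XS_q^*E_q=0$ unless $X$ is constant. What the hypothesis does give is the adjoint relation $\la^*S_p^*|_{\sH^\prime}=S_q^*\la^*$ (since $T^{\prime*}=S_p^*|_{\sH^\prime}$), which is a different statement. Your proposed derivation of $V(\z)^*V(\z)-U(\z)^*U(\z)=V(0)$ on $\BT$ --- which is indeed the correct reformulation of \eqref{outerfact} --- explicitly rests on collapsing double sums using the false co-intertwining, so that computation fails as written; the actual proof in \cite{FFGK} is a nontrivial operator-theoretic argument, not a Fourier-coefficient bookkeeping exercise. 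Second, your argument for $\det V(z)\neq0$ on $\BD$ and $V^{-1}\in H_{q\ts q}^\iy$ is circular: the state operator $S_q^*$ in the realization of $V$ has spectral radius $1$ (it is not stable), so inverting the realization only produces $V^{-1}$ in a neighborhood of $z=0$; extending it to all of $\BD$ requires precisely the invertibility of $V$ on $\BD$ that is being claimed. (This is exactly the logical order used in the paper's proof of Proposition \ref{prop_real}, where the invertibility of $V$ on $\BD$ is \emph{imported} from the present theorem in order to bound $r_{spec}(F)$.) These two points are the substance of the second half of the theorem; as it stands your proposal establishes only the existence statement \eqref{clt2} and for the rest must fall back on the citation.
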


The formulas for $X$, $U$ and $V$ appearing in \eqref{clt00} and the identity \eqref{outerfact}  are obtained from \cite[Theorem IV.6.6]{FFGK} using $\g=1$ and $A=\la$.

The following theorem (see \cite[Theorem  IV.7.5]{FFGK}) shows that the function $X$ constructed in the second part of the above theorem is the maximum entropy solution.

\begin{thm}\label{thc2}
The function  $X$ in \eqref{clt00} is the maximal entropy solution, that is,
if $Y$ in $H_{p\times q}^\infty$ satisfies $\la = P_{\sH^\prime} T_Y$ and $\|Y\|_\infty \leq 1$,
then  $\sE(Y)\leq \sE(X)$. Moreover, the maximal entropy solution is unique and
\begin{equation}\label{entclt}
\sE(X) = -\ln \det[ V(0)] = -\ln \det[ E_q^*(I - \la^* \la)^{-1}E_q].
 \end{equation}
\end{thm}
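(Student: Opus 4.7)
The plan is to prove Theorem \ref{mainthm} in two stages: first to describe the maximum entropy solution $X$ abstractly via the central commutant lifting formulas \eqref{clt00}--\eqref{entclt}, and then to use Theorem \ref{thmpos} together with the realization \eqref{realGK} to collapse those (generally infinite-dimensional) formulas into the finite-dimensional state-space realization \eqref{realx0}.

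For the first stage, I would set the Leech problem up as a commutant lifting problem by taking $\sH'$ to be the closure of the range of $T_G^*$ in $\ell_+^2(\BC^p)$ and defining $\la:\ell_+^2(\BC^q)\to \sH'$ to be the unique contraction satisfying $T_G\la=T_K$. The hypothesis $T_GT_G^*-T_KT_K^*>0$ then forces $\|\la\|<1$, so that the second part of Theorem \ref{thcl} applies and the unique maximum entropy solution (by Theorem \ref{thc2}) is $X=UV^{-1}$ given by \eqref{clt00} with entropy $\sE(X)=-\ln\det V(0)$. This stage does not yet use rationality and is, I expect, the content of Section \ref{infmodel}.

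For the second stage I would use the explicit formula in Theorem \ref{thmpos} for $(T_GT_G^*-T_KT_K^*)^{-1}$ to write $(I-\la^*\la)^{-1}E_q$ in closed form. Using $\la^*\la=T_K^*(T_GT_G^*)^{-1}T_K$ and $(I-\la^*\la)^{-1}=I+T_K^*(T_GT_G^*-T_KT_K^*)^{-1}T_K$, the vector $(I-\la^*\la)^{-1}E_q$ is expressed in terms of $T_R^{-1}$, $W_{obs}$ and the matrix $\om$ from \eqref{defOm}. Substituting the realization \eqref{realGK} into $U$ and $V$ from \eqref{clt00} and using the Stein equations \eqref{p1p2} together with the Riccati equation \eqref{are}, these infinite-dimensional expressions should reduce to rational functions with state matrix $A_0=A-\ga C_0$ from Theorem \ref{thmpos}(i)(c). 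The resulting algebra produces the coefficients $C_1,C_2,D_0,B_0,D_U,D_V$ in \eqref{defC12}--\eqref{defDUDV}, and in particular yields a state-space realization of $V$ with state matrix $A_0$, output matrix $C_2$, input matrix $B_0$, and $V(0)=D_V$; the corresponding realization of $U$ has the same state matrix and $U(0)=D_U$.

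Having realized $V$ with stable $A_0$ and invertible value $V(0)=D_V$, the standard state-space inversion formula places $V^{-1}$ in realized form with state matrix $A^\times=A_0-B_0D_V^{-1}C_2$, and the product $UV^{-1}$ gives \eqref{realx0}; the McMillan degree bound is immediate since the state dimension is $n$. Stability of $A^\times$ and the inclusion $V^{-1}\in H_{q\times q}^\iy$ both follow from the fact in Theorem \ref{thcl} that $V^{-1}$ is outer. Positivity of $D_V=V(0)$ comes from \eqref{outerfact}, since $V(0)^{1/2}V^{-1}$ is the outer spectral factor of $I-X^*X$. The entropy formula \eqref{entx} is then the direct substitution $V(0)=D_V$ into \eqref{entclt}. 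Finally, $\|X\|_\iy<1$ follows because the rational outer factor $V(0)^{1/2}V^{-1}$ has no zeros on the closed disc (as $A^\times$ is stable and $D_V$ is invertible), so by \eqref{outerfact} the quantity $I-X(\z)^*X(\z)$ is bounded below by a positive constant on $\BT$, which by the maximum principle extends to all of $\ol{\BD}$. The main obstacle is the algebraic bookkeeping of the second stage: verifying that the finite-dimensional matrix $A^\times$ obtained after the feedback correction is indeed the state matrix of $V^{-1}$, and inherits stability from the outerness of $V^{-1}$, rather than only from the stability of $A_0$.
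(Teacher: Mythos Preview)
Your proposal does not address the statement you were asked to prove. Theorem \ref{thc2} is the abstract commutant lifting assertion that, for any strict contraction $\la$ intertwining $S_q$ with $T'$, the central function $X=UV^{-1}$ of \eqref{clt00} maximizes the entropy among all contractive interpolants $Y$ with $P_{\sH'}T_Y=\la$, and that $\sE(X)=-\ln\det V(0)$. In the paper this theorem is not proved at all; it is quoted from \cite[Theorem IV.7.5]{FFGK}. A proof would require comparing the entropy of an arbitrary contractive interpolant $Y$ to that of $X$, typically via the Szeg\H{o}-type identity relating $\sE(Y)$ to the outer spectral factor of $I-Y^*Y$ and the one-step prediction error operator $E_q^*(I-\la^*\la)^{-1}E_q$; none of this appears in your write-up.

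What you have written is instead a sketch of the proof of Theorem \ref{mainthm}, and in it you explicitly \emph{invoke} Theorem \ref{thc2} (``the unique maximum entropy solution (by Theorem \ref{thc2}) is $X=UV^{-1}$''). So as a proof of Theorem \ref{thc2} the argument is circular. As a sketch of Theorem \ref{mainthm} your outline is broadly in line with the paper's Sections \ref{infmodel}--\ref{secrealX}, but that is not what was asked.
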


\medskip
In the remaining part of this section we will  apply the previous theorems to the special choice of $\la$ associated with our Leech problem.  For this special case $\la$ is given in item (iii) of  the following lemma for the general case when   $G$ and $K$ are  matrix-valued  $H^\infty$ functions and not necessarily rational functions.

\begin{lem}\label{lempos}
Let  $G$  and $K$ be  matrix-valued $H^\iy$  functions of sizes $m\ts p$ and $m\ts q$, respectively, and assume that $T_GT_G^* - T_KT_K^*$ is strictly positive.
 Then the following statements hold.
 \begin{itemize}
   \item[(i)] The operator $T_GT_G^*$ is invertible, or equivalently,
   $T_G^*$ is one-to-one and has closed range.
   \item[(ii)] The subspace $\sH^\prime = \im T_G^*$ is invariant for the
   backward shift $S_p^*$.
   \item[(iii)] The operator $\la = T_G^* \left(T_GT_G^* \right)^{-1}T_K$ viewed as an operator from $\ell_+^2(\BC^q)$ into $\sH^\prime$  is a strict contraction. Moreover,
   \begin{equation}\label{lam3}
    T_G \la = T_K.
   \end{equation}
   \item[(iv)] The operator $\la$ intertwines $S_q$ with $T^\prime$, that is,
   \begin{equation}\label{lam31}
    \quad T^\prime \la = \la S_q
   \end{equation}
 where $T^\prime$ on $\sH^\prime$ is the compression of $S_p$ to $\sH^\prime$,
 that is, $T^\prime = P_{\sH^\prime}S_p |\sH^\prime$.
 \end{itemize}
\end{lem}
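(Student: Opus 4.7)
My plan is to handle the four items in order, each by a short operator-theoretic argument. For item (i), strict positivity of $T_GT_G^* - T_KT_K^*$ gives $T_GT_G^*\geq T_GT_G^* - T_KT_K^*\geq \vp I$ for some $\vp>0$, so $T_GT_G^*$ is bounded below and hence invertible; the equivalence with $T_G^*$ being one-to-one with closed range is then standard. For item (ii), I would apply the Toeplitz intertwining $T_G S_p = S_m T_G$, take adjoints to obtain $S_p^*T_G^* = T_G^* S_m^*$, and deduce $S_p^*(\ra T_G^*)\subseteq \ra T_G^*$; closedness of $\ra T_G^*$ is already in (i), so $\sH'$ is a closed $S_p^*$-invariant subspace.

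For item (iii), the identity $T_G\la = T_GT_G^*(T_GT_G^*)^{-1}T_K = T_K$ is immediate from the definition of $\la$. To prove strict contractivity, set $M = (T_GT_G^*)^{1/2}$, which is both bounded above and bounded below by (i). Then $\la^*\la = T_K^*M^{-2}T_K$. Conjugating $T_GT_G^* - T_KT_K^*\geq \vp I$ by $M^{-1}$ gives $I - M^{-1}T_KT_K^*M^{-1}\geq \vp M^{-2}\geq \vp\|M\|^{-2} I$, which yields $\|\la\|^2 = \|M^{-1}T_K\|^2 \leq 1 - \vp\|M\|^{-2} < 1$.

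The main step is item (iv). Here I would use that $\sH' = \ra T_G^* = (\kr T_G)^\perp$, so $T_G$ restricts to a bijection from $\sH'$ onto $\ell_+^2(\BC^m)$. The plan is to verify $T'\la = \la S_q$ by applying $T_G$ to both sides and invoking injectivity of $T_G|_{\sH'}$. Since $\sH'^\perp = \kr T_G$, on one side we obtain $T_G T'\la f = T_G P_{\sH'}S_p\la f = T_G S_p\la f = S_m T_G\la f = S_m T_K f$, using the intertwining relation and $T_G\la = T_K$ from (iii). On the other side, $T_G\la S_q f = T_K S_q f = S_m T_K f$ by $T_K S_q = S_m T_K$. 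Since both $T'\la f$ and $\la S_q f$ lie in $\sH'$, injectivity of $T_G$ on $\sH'$ closes the argument.

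I do not anticipate a real obstacle: the lemma collects standard commutant-lifting preliminaries. The only care needed is in (iii), where the lower bound $\vp M^{-2}\geq \vp\|M\|^{-2}I$ relies on boundedness of $M$ rather than mere positivity, and in (iv), where the compression $T' = P_{\sH'}S_p|_{\sH'}$ can be replaced by $S_p$ under $T_G$ precisely because $\sH'^\perp$ coincides with $\kr T_G$.
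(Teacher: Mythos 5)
Your proposal is correct and follows essentially the same route as the paper: items (i), (ii), and (iv) are argued exactly as in the text (in particular the key step in (iv) of applying $T_G$, using $\sH'=(\kr T_G)^\perp$, and cancelling by injectivity of $T_G$ on $\sH'$). The only cosmetic difference is in (iii), where you bound $\|\la\|$ via $\la^*\la=T_K^*(T_GT_G^*)^{-1}T_K$ and an explicit norm estimate, whereas the paper conjugates $I-\la\la^*$ by $T_G^*$ to get $T_G(I-\la\la^*)T_G^*=T_GT_G^*-T_KT_K^*$ and reads off strict positivity; both are valid.
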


\begin{proof}[\bf Proof]
Because  $T_GT_G^* - T_KT_K^*$ is strictly positive, $T_G T_G^*$ is also strictly positive. Hence $T_G^*$ is one-to-one and has closed range. Thus item (i) holds. From item (i) we conclude  that  $\sH'$ is a closed subspace of $\ell_+^2(\mathbb{C}^p)$.  Using $S_p^* T_G^* = T_G^* S_m^*$, it follows that
$\sH^\prime = \im T_G^*$ is an invariant subspace for the
backward shift $S_p^*$. Therefore  item (ii) holds.

Using the definition of $\la$ we see that
\[
T_G\la u=  T_GT_G^* \left(T_GT_G^* \right)^{-1}T_Ku= T_Ku, \quad u\in \ell_+^2(\BC^q).
\]
This proves \eqref{lam3}. We also have $\la^* T_G^* = T_K^* $. It follows that  the operator
\[
T_G(I - \la \la^*)T_G^*  = T_G T_G^* - T_K T_K^*
\]
is strictly positive. Using $\sH^\prime = \im T_G^*$ and the fact that  $T_G^*$ is one-to-one and has closed range, we conclude that
$I - \la \la^*$ is also strictly positive.
In other words, $\la$ is a strict contraction and  item (iii) holds.

Recall   {that } $T^\prime$ is the compression of $S_p$ onto $\sH^\prime$.
Because    $\sH^\prime$ is
an invariant subspace for the backward shift $S_p^*$, we have
 $P_{\sH^\prime} S_p =  T^\prime P_{\sH^\prime}$.
(In the language of the commutant lifting theorem,
$S_p$ is an isometric dilation of $T^\prime$.)
Notice that $\sH^\prime = (\kr T_G)^\perp$. Now observe that
\[
T_G T^\prime \la =  T_G S_p \la    =  S_m T_G  \la =
S_m  T_K = T_K S_q =   T_G \la S_q.
\]
Hence $T^\prime \la = \la S_q$.
Therefore item (iv) holds.
\end{proof}

The above lemma shows that operator   $\la = T_G^* \left(T_G T_G^*\right)^{-1} T_K$ mapping $ \ell_+^2(\BC^q)$ into  $\sH^\prime$ satisfies the hypothesis of Theorem \ref{clt2}. In particular,
$X= U V^{-1}$ in \eqref{clt00} is the maximal entropy solution;
see Theorem \ref{thc2}. In Section \ref{secrealX},  we will construct the
finite dimensional state space realization for $X$ in Theorem \ref{mainthm}.
 According to Theorem \ref{thcl}
the operator $\la = P_{\sH^\prime} T_X$ and $\|X\|_\infty \leq 1$.
Hence $T_K = T_G \la = T_G T_X$, or equivalently, $G X = K$.
Therefore $X$ is a   solution to the Leech problem assocaited to $G$ and $K$.
This $X = U V^{-1}$ is also the unique
maximal entropy solution over the set of all contractive
analytic solutions
$Y$ for  $G Y = K$.
If $Y$ is contractive analytic solution to $G Y =K$, then $T_G T_Y = T_K$ and $\|T_Y\| =\|Y\|_\infty \leq 1$. Notice that
\[
T_G P_{\sH^\prime} T_Y = T_G T_Y = T_K = T_G \la.
\]
Because    $\sH^\prime = (\kr T_G)^\perp$, we have  $\la = P_{\sH^\prime}T_Y$. Theorem \ref{thc2} guarantees that $\mathcal{E}(Y) \leq \mathcal{E}(X)$ with equality if and only if $Y =X$. Since $X = U V^{-1}$,  formula
\eqref{entx} is a direct consequence of Theorem \ref{thc2}.

\setcounter{equation}{0}
\section{The infinite dimensional state space model}\label{infmodel}
Throughout this section  $G\in H_{m\times p}^\infty$  and $K\in H_{m\times q}^\infty$, and   $T_GT_G^* - T_K T_K^*$ is assumed to be strictly positive. Furthermore, $\la = T_G^* \left(T_GT_G^* \right)^{-1}T_K$ is viewed as an operator from $\ell_+^2(\BC^q)$ into $\sH^\prime=\im T_G^*$,  and  $X = U V^{-1}$ is the maximal entropy solution in Theorem \ref{thcl} corresponding to this choice of $\la$.

The following proposition provides an infinite dimensional state space realization for $X$ even when $G$ and $K$ are   nonrational.

\begin{prop} \label{prop_real}
Assume that $T_G T_G^* - T_K T_K^*$ is strictly positive, where  $G$ and $K$ are functions in $H_{m\times p}^\infty$ and $H_{m\times q}^\infty$,  respectively.  Let $\la$ be the strict contraction defined by $\la = T_G^* \left(T_G T_G^*\right)^{-1} T_K$.  Then   the function $X = U V^{-1}$ in \eqref{clt00},   is given by the following infinite dimensional state space realization
\begin{equation}\label{realx}
X(z) = D_U D_V^{-1} +
 z\left(E_p^* T_G^* - D_U D_V^{-1} E_q^* T_K^* \right)  (I - z F)^{-1} S_m^* \Xi  D_V^{-1}
 \quad (z \in \mathbb{D}).
 \end{equation}
Here $\Xi$ and $F$ are the operators defined by
 \begin{align}
  \Xi &=  (T_GT_G^* - T_KT_K^*)^{-1}T_KE_q :\mathbb{C}^q \rightarrow \ell_+^2(\mathbb{C}^m), \label{realXi} \\[.1cm]
 F   &= S_m^* - S_m^* \Xi D_V^{-1} E_q^* T_K^* \mbox{ on }\ell_+^2(\mathbb{C}^m), \label{realF}
 \end{align}
 and $D_U$ and $D_V$  are given by
 \begin{equation} \label{realx1b}
D_U = E_p^* T_G^*  \Xi:\mathbb{C}^q \rightarrow\mathbb{C}^p, \quad
D_V   = I_q + E_q^* T_K^* \Xi:\mathbb{C}^q \rightarrow\mathbb{C}^q.
\end{equation}
Finally, the spectral radius $r_{spec}(F) \leq 1$.
\end{prop}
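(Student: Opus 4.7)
My plan is to bring $U$ and $V$ to a common state-space form (same state operator and ``controllability'' operator), then apply the standard inversion and composition formulas to obtain the realization for $X = UV^{-1}$, and finally verify the spectral radius bound by exhibiting $(I-zF)^{-1}$ for $|z|<1$.

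\textbf{Step 1: Rewrite $U$ and $V$ on $\ell^2_+(\BC^m)$.}
From $\la = T_G^*(T_GT_G^*)^{-1}T_K$ one computes $\la^*\la = T_K^*(T_GT_G^*)^{-1}T_K$, and the Woodbury/Sherman--Morrison identity gives $(I-\la^*\la)^{-1} = I + T_K^*(T_GT_G^* - T_KT_K^*)^{-1}T_K$. Using this together with $S_p^*T_G^* = T_G^*S_m^*$ and $S_q^*T_K^* = T_K^*S_m^*$, and the fact that $E_q^*(I-zS_q^*)^{-1}E_q = I_q$ since $S_q^*E_q=0$, one obtains
\begin{align*}
U(z) &= D_U + z\,E_p^*T_G^*(I-zS_m^*)^{-1}S_m^*\Xi, \\
V(z) &= D_V + z\,E_q^*T_K^*(I-zS_m^*)^{-1}S_m^*\Xi.
\end{align*}
Thus $U$ and $V$ share state operator $A=S_m^*$, ``$B$-operator'' $B=S_m^*\Xi$, feedthroughs $D_U,D_V$, and output operators $C_U=E_p^*T_G^*,\ C_V=E_q^*T_K^*$. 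In particular $D_U = U(0)$ and $D_V = V(0)$, and $D_V$ is invertible by Theorem \ref{thcl}.

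\textbf{Step 2: State-space inversion and composition.}
With $A^\times := A - BD_V^{-1}C_V$, the standard formula yields
\[
V(z)^{-1} = D_V^{-1} - z\,D_V^{-1}C_V(I-zA^\times)^{-1}BD_V^{-1}.
\]
A short computation (using $(I-zA) = (I-zA^\times) - zBD_V^{-1}C_V$) gives the key identity $(I-zA)^{-1}B\,V(z)^{-1} = (I-zA^\times)^{-1}BD_V^{-1}$. Multiplying the realization of $U$ by $V^{-1}$ and using this identity, all terms collapse to
\[
X(z) = D_UD_V^{-1} + z\bigl(C_U - D_UD_V^{-1}C_V\bigr)(I-zA^\times)^{-1}BD_V^{-1}.
\]
Substituting the explicit $A,B,C_U,C_V$ reproduces \eqref{realx} with $A^\times = F = S_m^* - S_m^*\Xi D_V^{-1}E_q^*T_K^*$.

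\textbf{Step 3: Spectral radius of $F$.}
We show $(I-zF)$ is invertible on $\ell^2_+(\BC^m)$ for every $|z|<1$. Rewrite $I-zF = (I-zS_m^*) + zS_m^*\Xi D_V^{-1}E_q^*T_K^*$. Given $x$, solving $(I-zF)y=x$ for $y$ amounts to
\[
(I-zS_m^*)y = x - zS_m^*\Xi D_V^{-1}E_q^*T_K^*y.
\]
Since $(I-zS_m^*)^{-1}$ exists for $|z|<1$, applying $E_q^*T_K^*$ to both sides and using the intertwining and the identity $(I-zS_m^*)^{-1}zS_m^* = (I-zS_m^*)^{-1} - I$ together with $D_V = I_q + E_q^*T_K^*\Xi$, one reduces the equation for $E_q^*T_K^*y$ to $V(z)\beta = E_q^*T_K^*(I-zS_m^*)^{-1}x$ with $\beta = D_V^{-1}E_q^*T_K^*y$. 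By Theorem \ref{thcl}, $V^{-1}\in H^\infty_{q\times q}$, so $V(z)$ is invertible for each $|z|<1$; $\beta$ is thus uniquely determined, and then $y = (I-zS_m^*)^{-1}[x-zS_m^*\Xi\beta]$ inverts $(I-zF)$. Hence $\sigma(F)\subset\overline{\BD}$, i.e.\ $r_{\mathrm{spec}}(F)\le 1$.

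The main obstacle is Step~1: keeping the intertwinings and the Woodbury rearrangement straight so that the ``messy'' formulas for $U$ and $V$ acquire the clean common-realization form needed to invoke the textbook inversion and composition formulas. Once that is done, Steps~2 and~3 are essentially bookkeeping plus the observation that $V^{-1}\in H^\infty$.
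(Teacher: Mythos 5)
Your proposal is correct and follows essentially the same route as the paper: the Woodbury rearrangement of $(I-\la^*\la)^{-1}$ and $\la(I-\la^*\la)^{-1}$ (the paper's Lemma \ref{leminv0}) produces common realizations of $U$ and $V$ with state operator $S_m^*$ and input operator $S_m^*\Xi$, after which the standard state-space inversion and composition formulas collapse $UV^{-1}$ to \eqref{realx}. The only divergence is your Step 3: the paper obtains $r_{spec}(F)\le 1$ by citing Theorem 2.1 of \cite{BGKR08} (using that $V(z)$ is invertible on $\BD$ and $r_{spec}(S_m^*)\le 1$), whereas you prove the same fact directly by solving $(I-zF)y=x$ through the reduction to $V(z)\beta=E_q^*T_K^*(I-zS_m^*)^{-1}x$ --- a self-contained and equally valid argument.
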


The following lemma is used to prove the above result.

\begin{lem}\label{leminv0}
Assume that $T_G T_G^* - T_K T_K^*$ is strictly positive,
where  $G$ and $K$ are functions in $H_{m\times p}^\infty$ and
$H_{m\times q}^\infty$,  respectively.
Let $\la$ be the strict contraction defined by
$\la = T_G^* \left(T_G T_G^*\right)^{-1} T_K$. Then
\begin{align} \label{invda2}
(I - \la^* \la)^{-1} &= I + T_K^* (T_GT_G^* - T_KT_K^*)^{-1}T_K,\\
\la(I-\la^*\la)^{-1} &=  T_G^*(T_GT_G^* - T_KT_K^*)^{-1}T_K. \label{UU}
\end{align}
\end{lem}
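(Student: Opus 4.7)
The plan is to treat both identities as Woodbury-type manipulations and reduce everything to algebraic identities on $T_GT_G^*$, $T_KT_K^*$, and $T_K$. Lemma \ref{lempos}(i) ensures that $T_GT_G^*$ is strictly positive, hence invertible, so $\la = T_G^*(T_GT_G^*)^{-1}T_K$ is well-defined. Taking the adjoint gives $\la^* = T_K^*(T_GT_G^*)^{-1}T_G$, and direct composition yields
\[
\la^*\la = T_K^*(T_GT_G^*)^{-1}T_GT_G^*(T_GT_G^*)^{-1}T_K = T_K^*(T_GT_G^*)^{-1}T_K.
\]
Thus the identity \eqref{invda2} to be proved takes the abstract form $(I - K^*A^{-1}K)^{-1} = I + K^*(A - KK^*)^{-1}K$ with $A = T_GT_G^*$ and $K = T_K$, which is a standard Sherman--Morrison--Woodbury identity.

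For step one I would verify \eqref{invda2} by showing directly that the product
\[
\bigl(I - T_K^*(T_GT_G^*)^{-1}T_K\bigr)\bigl(I + T_K^*(T_GT_G^* - T_KT_K^*)^{-1}T_K\bigr)
\]
equals $I$. Expanding the product and factoring out $T_K^*(T_GT_G^*)^{-1}$ on the left and $T_K$ on the right, the middle factor becomes $T_GT_G^*(T_GT_G^* - T_KT_K^*)^{-1} - I - T_KT_K^*(T_GT_G^* - T_KT_K^*)^{-1}$, which collapses to $(T_GT_G^* - T_KT_K^*)(T_GT_G^* - T_KT_K^*)^{-1} - I = 0$. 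The hypothesis that $T_GT_G^* - T_KT_K^*$ is strictly positive is used precisely to guarantee that $(T_GT_G^* - T_KT_K^*)^{-1}$ exists.

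For the second identity \eqref{UU}, I would substitute the expression from \eqref{invda2} into $\la(I-\la^*\la)^{-1}$:
\[
\la(I-\la^*\la)^{-1} = T_G^*(T_GT_G^*)^{-1}T_K + T_G^*(T_GT_G^*)^{-1}T_KT_K^*(T_GT_G^* - T_KT_K^*)^{-1}T_K,
\]
and use the algebraic identity $I + T_KT_K^*(T_GT_G^* - T_KT_K^*)^{-1} = T_GT_G^*(T_GT_G^* - T_KT_K^*)^{-1}$, which is immediate from writing $T_GT_G^* = (T_GT_G^* - T_KT_K^*) + T_KT_K^*$. After this substitution, the middle factor $(T_GT_G^*)^{-1}T_GT_G^* = I$ cancels and the claimed formula $T_G^*(T_GT_G^* - T_KT_K^*)^{-1}T_K$ drops out.

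There is no real obstacle here; the computation is entirely routine operator algebra. The only point requiring care is bookkeeping of the various inverses, all of which are legitimate under the strict positivity hypothesis.
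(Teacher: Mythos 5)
Your proposal is correct and follows essentially the same route as the paper: both compute $\la^*\la = T_K^*(T_GT_G^*)^{-1}T_K$, obtain \eqref{invda2} from the Sherman--Morrison--Woodbury identity $(I - C^*A^{-1}C)^{-1}=I+C^*(A-CC^*)^{-1}C$ (which the paper cites and you verify by direct multiplication), and derive \eqref{UU} by substituting \eqref{invda2} and using $T_GT_G^* = (T_GT_G^*-T_KT_K^*)+T_KT_K^*$. The only cosmetic difference is your explicit verification of the Woodbury identity, where it is worth noting that checking the product in one order suffices because $I-\la^*\la$ is already known to be invertible.
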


\begin{proof}[\bf Proof]
Using
$\la =  T_G^*(T_G T_G^*)^{-1} T_K$, we obtain
\[
I - \la^* \la =  I - T_K^* (T_GT_G^*)^{-1}T_K.
\]
The operator  inversion formula  $(I - C^*A^{-1}C)^{-1}= I + C^*(A - CC^*)^{-1}C$ yields the formula for $(I - \la^* \la)^{-1}$ in \eqref{invda2}.

On the other hand,
\begin{align*}
\la(I-\la^*\la)^{-1} &=
T_G^*(T_GT_G^*)^{-1}T_K\Big(I + T_K^* (T_GT_G^* - T_KT_K^*)^{-1}T_K\Big)\\
&=T_G^*(T_GT_G^*)^{-1}\Big(I + T_KT_K^* (T_GT_G^* - T_KT_K^*)^{-1}\Big)T_K\\
&= T_G^*(T_GT_G^*)^{-1}\Big((T_GT_G^* - T_KT_K^*)+ T_KT_K^*\Big) (T_GT_G^* - T_KT_K^*)^{-1}T_K\\
&= T_G^*(T_GT_G^* - T_KT_K^*)^{-1}T_K.
\end{align*}
This yields the formula for $\la(I-\la^*\la)^{-1}$ in \eqref{UU}.
\end{proof}

\begin{proof}[\textbf{Proof of Proposition \ref{prop_real}}]
Recall that $X  = U V ^{-1}$. We first establish a state space realization for $U$.
By employing \eqref{UU}, we obtain
\begin{align*}
U(z) &= E_p^*(I - z S_p^*)^{-1} \la(I - \la^* \la)^{-1} E_q\\
 &= E_p^*(I - z S_p^*)^{-1} T_G^*(T_GT_G^* - T_KT_K^*)^{-1}T_K E_q\\
 &= E_p^* T_G^*(I - z S_m^*)^{-1} (T_GT_G^* - T_KT_K^*)^{-1}T_K E_q\\
 &= E_p^* T_G^*(I - z S_m^*)^{-1} \Xi, \quad |z|<1.
\end{align*}
See \eqref{realXi} for the definition of $\Xi$. Using $D_U = E_p^* T_G^* \Xi$, we see that   a state space realization for $U$ is given by
\begin{equation}\label{realU}
U(z) =  D_U + z E_p^* T_G^*(I - z S_m^*)^{-1}S_m^* \Xi, \quad |z|<1.
\end{equation}

To compute a state space realization for $V$, we use \eqref{invda2} in the following calculation:
\begin{align*}
V(z) &= E_q^*(I - z S_q^*)^{-1}(I - \la^* \la)^{-1} E_q\\
&= E_q^* E_q  + E_q^*(I - z S_q^*)^{-1} T_K^* (T_GT_G^* - T_KT_K^*)^{-1}T_KE_q\\
&= I_q + E_q^* T_K^* (I - z S_m^*)^{-1} \Xi\\
&= I_q + E_q^* T_K^* \Xi + z E_q T_K^* (I - z S_m^*)^{-1} S_m^* \Xi, \quad |z|<1.
\end{align*}
By consulting \eqref{realx1b}, we see that a state space realization for $V$ is given by
\begin{equation}\label{realV}
V(z) =  D_V + z E_q^* T_K^*  (I - z S_m^*)^{-1}  S_m^* \Xi, , \quad |z|<1.
\end{equation}

Using a classical state space inversion formula, the inverse of $V(z)^{-1}$ in a neighborhood of zero is given by
\begin{align*}
V(z)^{-1} &= D_V^{-1} - z D_V^{-1}E_q^* T_K^* (I - z F)^{-1} S_m^* \Xi D_V^{-1},  \mbox{ where}\nonumber\\
       &\hspace{1cm} F = S_m^* - S_m^* \Xi D_V^{-1} E_q^* T_K^*, \mbox{ as in } \eqref{realF}.
\end{align*}
On the other hand, by  the final part of Theorem \ref{thcl}, we know that $V(z)$ is invertible for each $z$ in $\mathbb{D}$. Since $r_{spec}(S_m^*) \leq 1$, we can then apply Theorem 2.1 in \cite{BGKR08} (with $ \l= z^{-1}$) to show that  $r_{spec}(F) \leq 1$.  Thus
\begin{equation}
\label{realVinv}
V(z)^{-1} = D_V^{-1} - z D_V^{-1}E_q^* T_K^* (I - z F)^{-1} S_m^* \Xi D_V^{-1}, \quad |z|<1.
\end{equation}

To compute a state space realization for the maximum entropy  solution $X$, we first observe (using the identity \eqref{realF})  that
\begin{align*}
&- z E_p^* T_G^*(I - z S_m^*)^{-1} S_m^*\Xi D_V^{-1}E_q^* T_K^*  (I - z F)^{-1} S_m^*\Xi D_V^{-1}\\
&\quad = zE_p^* T_G^*(I - z S_m^*)^{-1}\Big(F - S_m^* \Big)(I - z F)^{-1} S_m^*\Xi D_V^{-1}\\
&\quad= E_p^* T_G^*(I - z S_m^*)^{-1}\Big((I  - z S_m^*) - (I - z F) \Big)(I - z F)^{-1} S_m^*\Xi D_V^{-1}\\
&\quad= E_p^* T_G^* (I - z F)^{-1} S_m^*\Xi D_V^{-1} - E_p^* T_G^*(I - z S_m^*)^{-1} S_m^*\Xi D_V^{-1}.
\end{align*}
This readily implies that
\begin{align*}
&- z^2 E_p^* T_G^*(I - z S_m^*)^{-1}S_m^*\Xi D_V^{-1}E_q^* T_K^* (I - z F)^{-1}S_m^* \Xi D_V^{-1}=\\
&\qquad = zE_p^* T_G^* (I - z F)^{-1} S_m^*\Xi D_V^{-1} - zE_p^* T_G^*(I - z S_m^*)^{-1} S_m^*\Xi D_V^{-1}.
\end{align*}
The state space realizations for $U$ in \eqref{realU} and $V^{-1}$ in
\eqref{realVinv} then yield:
\begin{align*}
U(z)V(z)^{-1} &=  D_UD_V^{-1} -z D_UD_V^{-1}E_q^* T_K^* (I - z F)^{-1} S_m^*\Xi D_V^{-1}\\
&\quad + z E_p^* T_G^*(I - z S_m^*)^{-1}S_m^* \Xi D_V^{-1}\\
&\quad -z^2E_p^* T_G^*(I - z S_m^*)^{-1}S_m^*\Xi
 D_V^{-1}E_q^* T_K^* (I - z F)^{-1} S_m^*\Xi D_V^{-1}\\
&= D_U D_V^{-1} +
 z\left(E_p^* T_G^* - D_U D_V^{-1} E_q T_K^* \right)  (I - z F)^{-1} S_m^* \Xi  D_V^{-1}
\end{align*}
when $z \in \mathbb{D}$. This proves the state space formula for $X$ in \eqref{realx}.
\end{proof}

\setcounter{equation}{0}
\section{Proof of Theorem \ref{thmpos}}\label{secprthmQ}
Throughout the section  $G$ and $K$ are stable rational matrix functions of sizes $m\ts p$ and $m\ts q$, respectively, and we assume that $\begin{bmatrix} G&K\end{bmatrix}$ is given by the observable stable realization \eqref{realGK}. We first prove two lemmas.  The first deals with the $m \ts m$ rational matrix function  $R$ defined by \eqref{RGK}.

\begin{lem}\label{lempos1}
Let $R$ be the $m \ts m$ rational matrix function  defined by \eqref{RGK}. Then $T_R$ is strictly positive whenever  $T_G T_G^* - T_K T_K^*$ is strictly positive.
\end{lem}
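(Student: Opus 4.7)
The plan is to combine a scaling trick with the classical (non-strict) Leech theorem to produce a factorization $K = GX$ with $\|X\|_\iy < 1$, and then transfer pointwise positivity of $R$ on $\BT$ to uniform operator positivity of $T_R$.

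For step one, I would translate the hypothesis $T_GT_G^* - T_KT_K^* \geq \vp I$ (for some $\vp>0$) into the strict inequality $T_GT_G^* \geq \a\, T_KT_K^*$ for some $\a>1$. Using that $T_KT_K^* \leq \|T_K\|^2 I$, one can take $\a = 1 + \vp/\|T_K\|^2$ (the case $T_K=0$ is immediate, since then $T_R = T_{GG^*}$ already dominates $T_GT_G^* \geq \vp I$). Leech's theorem, as recalled in the introduction, applied to the scaled pair $G$ and $\sqrt{\a}\,K$ then supplies $Y \in H_{p\ts q}^\iy$ with $GY = \sqrt{\a}\,K$ and $\|Y\|_\iy \leq 1$, since $T_GT_G^* - T_{\sqrt{\a}K}T_{\sqrt{\a}K}^* = T_GT_G^* - \a\,T_KT_K^* \geq 0$ by construction. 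Setting $X := Y/\sqrt{\a}$ and $\rho := 1/\sqrt{\a} < 1$ gives $GX = K$ with $\|X\|_\iy \leq \rho$.

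For step two, I would use this factorization to write $R = G(I - XX^*)G^*$ on $\BT$; since $\|X(\z)\| \leq \rho$ a.e., one has $I - X(\z)X(\z)^* \geq (1-\rho^2)I$, and hence the pointwise lower bound $R(\z) \geq (1-\rho^2) G(\z)G(\z)^*$ holds for a.e.\ $\z \in \BT$. For $u \in \ell_+^2(\BC^m)$, Parseval on the boundary then gives
\[
\langle T_R u, u \rangle = \int_\BT \langle R(\z)u(\z), u(\z)\rangle \geq (1-\rho^2) \|G^* u\|_{L^2}^2 \geq (1-\rho^2)\|P_+ G^* u\|^2 = (1-\rho^2)\langle T_GT_G^* u, u\rangle,
\]
the middle inequality simply discarding the anti-analytic part of $G^* u$. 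Combining with $T_GT_G^* \geq T_GT_G^* - T_KT_K^* \geq \vp I$, this yields $T_R \geq (1-\rho^2)\vp\, I > 0$.

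The only real obstacle is step one: the version of Leech's theorem quoted in the introduction produces only $\|Y\|_\iy \leq 1$, not a strict contraction, so one must notice that the strict positivity of $T_GT_G^* - T_KT_K^*$ provides precisely the slack needed to scale $K$ up by $\sqrt{\a}>1$ and still satisfy the non-strict Leech hypothesis. This rescaling is what converts pointwise nonnegativity of $R$ into a uniform lower bound, and hence into strict positivity of $T_R$.
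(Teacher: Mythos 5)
Your proof is correct, but it takes a genuinely different route from the paper. The paper's argument is completely elementary and self-contained: it applies the operator inequality $T_GT_G^*-T_KT_K^*\geq \eta I$ to the coanalytic evaluation vectors $\varphi_{z,m}=\mat{cccc}{I_m & zI_m & z^2I_m & \cdots}^*$, for which $T_G^*\varphi_{z,m}=\varphi_{z,p}G(z)^*$ and $\varphi_{z,m}^*\varphi_{z,m}=(1-|z|^2)^{-1}I_m$, obtaining $G(z)G(z)^*-K(z)K(z)^*\geq \eta I_m$ on $\BD$ after cancelling the factor $(1-|z|^2)^{-1}$, and then letting $z$ tend to the boundary to get $R(\z)\geq \eta I_m$ a.e.\ on $\BT$, whence $T_R\geq \eta I$. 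You instead import the (non-strict) Leech theorem: the scaling $\a=1+\vp/\|T_K\|^2$ is chosen correctly so that $T_GT_G^*-\a T_KT_K^*\geq 0$, Leech then yields $GX=K$ with $\|X\|_\iy\leq\rho=\a^{-1/2}<1$, and the factorization $R=G(I-XX^*)G^*$ on $\BT$ combined with Parseval and $\|P_+G^*u\|\leq\|G^*u\|_{L^2}$ gives $T_R\geq(1-\rho^2)T_GT_G^*\geq(1-\rho^2)\vp I$; all of these steps check out, including the identification $T_G^*=P_+M_{G^*}|_{H^2}$ and the handling of the degenerate case $T_K=0$. What the paper's route buys is independence from Leech's existence theorem (a much heavier input, even though it is quoted in the introduction and so is not circular here) and the cleaner pointwise conclusion $R(\z)\geq\eta I_m$ with the same $\eta$ as in the hypothesis; what your route buys is the slightly more structured lower bound $T_R\geq(1-\rho^2)\,T_GT_G^*$ and an early appearance of the strictly contractive solution whose existence the paper only establishes later via commutant lifting.
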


\begin{proof}[\bf Proof]
Assume that $T_G T_G^* - T_K T_K^*$ is strictly positive. For each $z\in\BD$  put $\varphi_{z,m}=\mat{cccc}{I_m & zI_m & z^2I_m  &    \cdots}^*$. Note that
\[
T_G^* \varphi_{z,m}=\varphi_{z,p}G(z)^*,\quad T_K^* \varphi_{z,m}= \varphi_{z,q}K(z)^*,\quad
\varphi_{z,m}^* \varphi_{z,m}=\frac{1}{1-|z|^2}I_m.
\]
Since  $T_G T_G^* - T_K T_K^*$ is assumed to be strictly positive, there exists an $\eta>0$ such that $T_GT_G^*-T_KT_K^*\geq \eta I$. Multiplying  this inequality by $\varphi_{z,m}$ on the right and by $\varphi_{z,m}^*$ on the left gives
\[
\frac{G(z)G(z)^*-K(z)K(z)^*}{1-|z|^2}\geq \frac{\eta}{1-|z|^2}I_m   \quad (z\in \BD).
\]
Multiplying with $1-|z|^2$ and taking limits $z\to e^{i \omega}$ on the unit circle, shows
\[
R(e^{i\omega})=G(e^{i\omega})G(e^{i\omega})^*-K(e^{i\omega})K(e^{i\omega})^*\geq \eta I_m, \quad 0\leq \o\leq 2\pi.
\]
This implies $T_R\geq \eta I_m$.
\end{proof}

\begin{lem}\label{lempos2} Let $W_{obs}$ be defined by \eqref{defWobs}, and let $P_1$ and $P_2$ be the unique $n\ts n$ matrix solutions of   the  Stein equations \eqref{p1p2}.  Then
\begin{equation}\label{trp1p2}
T_GT_G^*-T_KT_K^* =T_R + W_{obs}(P_2 - P_1)W_{obs}^*.
\end{equation}
In particular,  the operator $T_GT_G^*-T_KT_K^*$ is strictly positive if and only if
the operator $T_R +  W_{obs}(P_2-P_1)W_{obs}^*$ is strictly positive.
\end{lem}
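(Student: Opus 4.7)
The plan is to reduce the lemma to the ``one-sided'' identity
\begin{equation*}
T_{GG^*} - T_G T_G^* = W_{obs} P_1 W_{obs}^*,
\end{equation*}
together with its $K$-counterpart $T_{KK^*} - T_K T_K^* = W_{obs} P_2 W_{obs}^*$. Once both are in hand, subtracting them and using $T_R = T_{GG^*} - T_{KK^*}$ (linearity of $F \mapsto T_F$ applied to the $L^\infty(\BT)$-functions $GG^*$ and $KK^*$) immediately yields $T_G T_G^* - T_K T_K^* = T_R + W_{obs}(P_2 - P_1) W_{obs}^*$. The ``in particular'' clause is then trivial, since the two operators in the equivalence are literally equal.

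To prove the key identity I would compare block matrix entries. The realization gives $G_0 = D_1$ and $G_k = CA^{k-1} B_1$ for $k \geq 1$. For $i \geq j$ one has $(T_G T_G^*)_{ij} = \sum_{l=0}^{j} G_{i-j+l} G_l^*$ (a truncated sum, after the substitution $l = j - k$), while $(T_{GG^*})_{ij}$ is the full $(i-j)$-th Fourier coefficient of $GG^*$, namely $\sum_{l \geq 0} G_{i-j+l} G_l^*$. The difference is the tail, which after substituting $s = l - j - 1$ and using the state-space formula for the $G_k$ collapses to
\begin{equation*}
\sum_{s \geq 0} CA^{i+s} B_1 B_1^* (A^*)^{j+s} C^* = CA^i \Bigl(\sum_{s \geq 0} A^s B_1 B_1^* (A^*)^s\Bigr) (A^*)^j C^* = CA^i P_1 (A^*)^j C^*.
\end{equation*}
The convergent series representation of $P_1$ used here follows from iterating the Stein equation $P_1 = A P_1 A^* + B_1 B_1^*$, using stability of $A$. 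The right-hand side is exactly $(W_{obs} P_1 W_{obs}^*)_{ij}$. The case $i < j$ follows by taking adjoints, since both $T_{GG^*}$ and $T_G T_G^*$ are self-adjoint ($GG^*$ being Hermitian on $\BT$).

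I anticipate no serious obstacle; the only tasks are the bookkeeping of Taylor-coefficient indices and the brief observation that $T_{GG^*}$ and $T_R$ are well-defined bounded Toeplitz operators, which holds because $G$ and $K$ are stable rational so that $GG^*, KK^*, R \in L^\infty(\BT)$. One minor stylistic alternative would be to recognize $T_{GG^*} - T_G T_G^* = H_{G^*}^* H_{G^*}$ as a classical Hankel identity and then factor the Hankel operator through the state space, but the direct entrywise computation is more transparent and requires no external machinery.
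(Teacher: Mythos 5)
Your proposal is correct and follows essentially the same route as the paper: the paper packages your tail-of-the-Fourier-coefficient computation as the classical identity $T_{GG^*}=T_GT_G^*+H_GH_G^*$ together with the factorization $H_G=W_{obs}W_{con,1}$ and $P_1=W_{con,1}W_{con,1}^*$ (and likewise for $K$), which is exactly the alternative you mention at the end. Your entrywise verification is a self-contained proof of that Hankel identity, so the two arguments differ only in presentation.
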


\begin{proof}[\bf Proof]
We  first recall  some elementary facts concerning Hankel operators. To this end, let
\[
H_G = \begin{bmatrix}
        G_1 & G_2 & G_3 & \cdots \\
        G_2 & G_3 & G_4 & \cdots \\
        G_4 & G_5 & G_6 & \cdots\\
        \vdots  & \vdots  & \vdots  & \vdots  \\
      \end{bmatrix}:\ell_+^2(\mathbb{C}^p) \rightarrow \ell_+^2(\mathbb{C}^m)
\]
be the Hankel operator determined by the Taylor series
$G(z) = \sum_{\nu=0}^\infty z^\nu G_\nu$.
In a similar way, let $H_K$ be the corresponding Hankel operator
mapping  $\ell_+^2(\mathbb{C}^q)$ into $\ell_+^2(\mathbb{C}^m)$
determined by $K$. Let $W_{con,1}$ mapping
$\ell_+^2(\mathbb{C}^p)$ into $\mathbb{C}^n$  and
$W_{con,2}$ mapping
$\ell_+^2(\mathbb{C}^q)$ into $\mathbb{C}^n$
be the controllability
operators defined by
\[
W_{con,j} = \begin{bmatrix}
              B_j & A B_j & A^2 B_j  & A^3 B_j & \cdots\,\, \\
            \end{bmatrix}, \quad j=1,2.
\]
From \eqref{p1p2} we see that  $P_j = W_{con,j}W_{con,j}^*$ for $j =1,2$. Using $G_\nu = C A^{\nu -1} B_1$ for all integers $\nu \geq 1$ and the corresponding result for $K$,  we see that $H_G = W_{obs} W_{con,1}$ and
$H_K = W_{obs} W_{con,1}$. Finally,
\begin{equation}\label{hank}
H_G H_G^* =  W_{obs} P_1  W_{obs}^*
\quad \mbox{and}\quad
H_K H_K^* =  W_{obs} P_2  W_{obs}^*.
\end{equation}

Next, notice the Toeplitz operators $T_{GG^*}$ and $T_{KK^*}$ are given by the following identities:
\[
T_{GG^*} = T_GT_G^* + H_GH_G^*\ands T_{KK^*} = T_KT_K^* + H_K H_K^*.
\]
Using $R = GG^* - KK^*$, we have $T_R=T_GT_G^*-T_KT_K^*+H_GH_G^*-H_KH_K^* $. But then  \eqref{hank}  yields \eqref{trp1p2}.
\end{proof}

\begin{proof}[{\bf Proof of  Theorem \ref{thmpos}}] Assume the operator $T_GT_G^*-T_KT_K^*$ is strictly positive. Then  Lemma~\ref{lempos1} tells us $T_R$ is strictly positive, and hence, see Remark \ref{remQ}, item (i) in Theorem \ref{thmpos} is fulfilled. Furthermore, applying Lemma \ref{leminv} below with
\begin{equation}\label{MTWN}
M = T_GT_G^*-T_KT_K^*, \quad T = T_R, \quad   W = W_{obs}
\ands N = P_2 - P_1,
\end{equation}
noting that $M=T+WNW^*$ is strictly positive, by the identity \eqref{trp1p2}, we see that the matrix $Q^{-1}+P_2-P_1$ is strictly positive, and  hence item (ii) in Theorem \ref{thmpos} is fulfilled. Furthermore, again in view of \eqref{trp1p2}, in this case the inversion formula \eqref{invM}  yields the formula to compute the inverse of $T_GT_G^*-T_KT_K^*$ in \eqref{defOm}.

Conversely, assume items (i) and (ii) in Theorem \ref{thmpos} are satisfied. Then item (i) implies that $T_R$ is strictly positive, as explained in Remark \ref{remQ}, and $Q=W_{obs}^*T_R^{-1}W_{obs}=W^*T^{-1}W$, using the notation of \eqref{MTWN} in the last identity. Note that item (ii) states that $Q^{-1}-N=Q^{-1}-P_1+P_2$ is strictly positive. Hence again using Lemma \ref{leminv} below and the identity \eqref{trp1p2}, we see that item (ii) implies that $T_GT_G^*-T_KT_K^*$ is strictly positive.
\end{proof}

\begin{lem}\label{leminv}
Let $M$ be an operator acting on a  Hilbert space $\sH$  such that
\begin{equation}\label{M}
M = T + W N W^*,
\end{equation}
where $T$ on $\sH$ is a strictly positive operator,  $N$ is a self adjoint operator on a Hilbert space $\sX$, and $W$ is an operator mapping $\sX$ into $\sH$
which is one-to-one and has closed range.  Set $Q = W^* T^{-1} W$. Then $Q$ is invertible. Furthermore, $M$ is strictly positive if and only if $Q^{-1} + N$ is strictly positive. Moreover, in that  case,
\begin{equation}\label{invM}
M^{-1} = T^{-1} - T^{-1} W N \big(I  + Q N )^{-1}W^* T^{-1}.
\end{equation}
\end{lem}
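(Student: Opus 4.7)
The argument splits naturally into three parts.

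First, I would verify that $Q$ is invertible. Strict positivity of $T$ provides $\eta>0$ with $T^{-1}\geq\eta I_\sH$, while the fact that $W$ is one-to-one with closed range yields a constant $c>0$ with $\|Wx\|\geq c\|x\|$ for every $x\in\sX$. Combining, $\langle Qx,x\rangle=\langle T^{-1}Wx,Wx\rangle\geq \eta c^{2}\|x\|^{2}$, so $Q$ is strictly positive and hence invertible.

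Second, for the equivalence of strict positivity, the idea is to reduce to an equivalent condition on the smaller space $\sX$. Write $M=T^{1/2}(I+VNV^{*})T^{1/2}$, where $V:=T^{-1/2}W$; since $T^{1/2}$ is invertible, $M>0$ on $\sH$ is equivalent to $I+VNV^{*}>0$ on $\sH$. Because $T^{-1/2}$ is bounded and invertible, $V$ inherits from $W$ being one-to-one with closed range, and $V^{*}V=Q$. Taking the polar decomposition $V=UQ^{1/2}$ produces an isometry $U\colon\sX\to\sH$ with $U^{*}U=I_\sX$, and $VNV^{*}=UQ^{1/2}NQ^{1/2}U^{*}$. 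Since $VNV^{*}$ annihilates $(\textrm{ran}\,V)^{\perp}$ and leaves $\textrm{ran}\,V$ invariant, $I+VNV^{*}$ acts as the identity on $(\textrm{ran}\,V)^{\perp}$ and, via the unitary equivalence furnished by $U$, as $I+Q^{1/2}NQ^{1/2}$ on $\textrm{ran}\,V$. Consequently $I+VNV^{*}>0$ if and only if $I+Q^{1/2}NQ^{1/2}>0$. The last operator equals $Q^{1/2}(Q^{-1}+N)Q^{1/2}$, so conjugating by the invertible $Q^{-1/2}$ yields the final equivalence with $Q^{-1}+N>0$.

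Third, under $Q^{-1}+N>0$ the operator $I+QN=Q(Q^{-1}+N)$ is invertible, so the right-hand side of \eqref{invM} is well defined. I would verify the formula by direct multiplication. Using $MT^{-1}=I+WNW^{*}T^{-1}$ and $W^{*}T^{-1}W=Q$, the key computation is
\[
MT^{-1}WN(I+QN)^{-1}W^{*}T^{-1}=W\bigl(N+NQN\bigr)(I+QN)^{-1}W^{*}T^{-1}=WNW^{*}T^{-1},
\]
where the last step uses the algebraic identity $N+NQN=N(I+QN)$. Subtracting this from $MT^{-1}=I+WNW^{*}T^{-1}$ gives $M$ applied to the proposed right-hand side of \eqref{invM} equal to $I$, which yields the inversion formula.

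The main obstacle is the second step: because $N$ is not assumed invertible, the classical Sherman--Morrison--Woodbury inversion is not directly applicable; the polar-decomposition reduction from $\sH$ down to $\sX$ is what replaces it and reveals $Q^{-1}+N$ as the natural positivity condition.
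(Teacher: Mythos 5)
Your proof is correct and follows essentially the same route as the paper: both reduce to the case $T=I_\sH$ by conjugating with $T^{\pm 1/2}$ and then split $\sH$ along the (closed) range of $W$ to identify the positivity of $M$ with that of $Q^{-1}+N$ on $\sX$ (the paper uses the invertible restriction $W_1:\sX\to\im W$ where you use the polar isometry $U$ — a cosmetic difference). The only genuine variation is that you verify the inversion formula \eqref{invM} by direct multiplication (legitimate, since $M$ is invertible so a one-sided check suffices) rather than deriving it from the block-diagonal inverse as the paper does.
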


\begin{proof}[\bf Proof]
Replacing $M$ by $T^{-1/2}MT^{-1/2}$ and $W$ by $T^{-1/2}W$, we see that without loss of generality we may assume that $T$ is the identity operator on $\sH$. Therefore, in what follows $M = I + W N W^*$. Note that in this case $Q=W^*W$.

The fact that  $W$ is one-to-one and  has closed range, implies that $Q=W^*W$ is invertible. It follows that the Moore-Penrose left inverse $W^+$  of $W$ is well-defined and is given by  $W^+=(W^*W)^{-1}W^*=Q^{-1}W^*$. Furthermore, the orthogonal project $\BP$ on $\sH$ mapping $\sH$ onto the range of $W$ is given $\BP=WQ^{-1}W^*$. Now note that
\begin{equation}\label{prM}
M = I + W N W^*=I-\BP  +\BP+ W N W^*=I-\BP+W(Q^{-1}+N)W^*.
\end{equation}
Put $\sH_1=\im W$ and  $\sH_0=\kr W^*$, and consider the operators
\begin{align*}
    &\t_1: \sH_1\to \sH, \quad  \t_1 u=u \quad (u\in \sH_1),  \\
    &W_1: \sX\to \sH_1, \quad  W_1 x=Wx \quad (x\in \sX).
\end{align*}
 Note that $W=\t_1W_1$ and $W^*=W_1^*\t_1^*$. Furthermore, $W_1$ is invertible, and $W_1^{-1}=Q^{-1}W^* \t_1$ . Using  \eqref{prM} we see that relative to the orthogonal decomposition $\sH=\sH_0 \oplus \sH_1$  the operator $M$ admits the following block operator matrix representation:
 \[
 M=\begin{bmatrix} I_{\sH_0}&0\\ 0 & W_1(Q^{-1}+N)W_1^*\end{bmatrix}
 \]
Since   $W_1$ is invertible, it follows that $M$ is strictly positive if and only $Q^{-1}+N$ is strictly positive. Moreover, in that case
\begin{align*}
M^{-1}&= \begin{bmatrix} I_{\sH_0}&0\\ 0 & W_1^{-*}(Q^{-1}+N)^{-1}W_1^{-1}\end{bmatrix}   \\
    &  =\begin{bmatrix} I_{\sH_0}&0\\ 0 & \t_1^* WQ^{-1}(Q^{-1}+N)^{-1}Q^{-1}W^* \t_1\end{bmatrix}
\end{align*}
It follows that
\begin{align*}
M^{-1}&= I-\BP+  WQ^{-1}(Q^{-1}+N)^{-1}Q^{-1}W^*\\
&=I- WQ^{-1}W^*+WQ^{-1}(Q^{-1}+N)^{-1}Q^{-1}W^*\\
&=I- W\Big(Q^{-1}-Q^{-1}(Q^{-1}+N)^{-1}Q^{-1}\Big)W^*
\end{align*}
Finally, note that
\begin{align*}
    & Q^{-1}-Q^{-1}(Q^{-1}+N)^{-1}Q^{-1}= \\
    &\hspace{2cm} = Q^{-1}- (Q^{-1}+N-N) (Q^{-1}+N)^{-1}Q^{-1}\\
    &\hspace{2cm} =N(Q^{-1}+N)^{-1}Q^{-1}= N(I+QN)^{-1}.
\end{align*}
This proves   \eqref{invM}.
\end{proof}

For a version of  Lemma \ref{leminv} with $T$ just nonnegative, not necessarily strictly positive, see Lemma 2.10 in \cite{FtHK13}.

\setcounter{equation}{0}
\section{Proof of Theorem \ref{mainthm}}\label{secrealX}

In this section,  we will convert the infinite dimensional state space
realization for the central solution $X$ in \eqref{realx} to the
finite dimensional realization for $X$ in \eqref{realx0},  and in the mean time
prove  Theorem \ref{mainthm}. Throughout  $G$ and $K$ are the rational matrix functions described by the observable stable realization \eqref{realGK}, and we assume that $T_G T_G^* - T_K T_K^*$ is strictly positive. Thus items (i) and (ii) in Theorem \ref{thmpos} are satisfied.  In what follows we shall freely use  the notations introduced in these two  items. In particular, the operator $T_R$ is strictly positive,  $Q$ is the stabilizing solution of the algebraic Riccati equation \eqref{are}, and the matrix $\de= R_0 - \Gamma^* Q \Gamma$ is strictly positive. We set
\begin{equation}\label{defAC04}
C_{0}= {\de^{-1}}( C - \Gamma^* Q  A ) \ands A_0 =A-\Gamma C_{0}.
\end{equation}
By item (c) in  Theorem \ref{thmpos} the matrix $A_0$ is stable. Using $C_0$ and $A_0$ in \eqref{defAC04},  the Riccati equation \eqref{are} can be rewritten as a Stein equation:
\begin{equation} \label{Steineq4}
Q-A^*QA_0=C^*C_0.
\end{equation}
The observability operator for  the pair $\{C_{0},A_{0}\}$ is the operator $W_0$ defined by
\begin{equation}\label{defwoo}
 W_{0} = \begin{bmatrix}   C_{0}  \\
   C_{0} A_{0}  \\
   C_{0} A_{0}^2 \\
   \vdots
 \end{bmatrix}:\mathbb{C}^n\rightarrow \ell_+^2(\mathbb{C}^m).
\end{equation}
We need the following lemma (cf., identity (3.19) in \cite{FKR2a}):
\begin{lem}\label{lemW0}
The operator $W_0$ is one-to-one, $ T_R^{-1} W_{obs} = W_{0}$, and
\begin{equation}
\label{idR4}
R(z)C_0(I_n-zA_0)^{-1}=  C(I_n-zA)^{-1}+\ga^*(zI_n-A^*)^{-1}Q.
\end{equation}
\end{lem}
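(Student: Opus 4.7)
My plan is to reverse the order of the three claims: I would first verify the rational identity \eqref{idR4}, then deduce the operator identity $T_R^{-1}W_{obs}=W_0$ from it by a Fourier coefficient comparison, and finally obtain the injectivity of $W_0$ as a one-line consequence.

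The main work (and the main obstacle) is the first step. To prove \eqref{idR4}, I would multiply both sides on the right by $(I_n-zA_0)$ and substitute the state-space formula \eqref{eqR} for $R$. Using $A-A_0=\ga C_0$, the cross terms $zC(I_n-zA)^{-1}\ga C_0$ appear on both sides and cancel, and the elementary identity $z(zI_n-A^*)^{-1}=I+A^*(zI_n-A^*)^{-1}$ separates the part containing $(zI_n-A^*)^{-1}$ from the polynomial part. The equation then reduces to the pair of matrix equalities
\[
Q-A^*QA_0=C^*C_0 \ands  C=R_0C_0+\ga^*QA_0.
\]
The first is precisely the Stein rewriting \eqref{Steineq4} of the algebraic Riccati equation \eqref{are}; the second follows by substituting $R_0=\de+\ga^*Q\ga$ and $A_0=A-\ga C_0$, which collapses the right-hand side to $\de C_0+\ga^*QA$, and then invoking the defining relation $\de C_0=C-\ga^*QA$ from \eqref{defAC04}. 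Both ingredients are essential: one kills the coefficient of $(zI_n-A^*)^{-1}$, the other the polynomial part.

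With \eqref{idR4} in hand, I expand both sides as Fourier series on $\BT$. Since $A_0$ is stable, $C_0(I_n-zA_0)^{-1}=\sum_{k\geq 0}z^k C_0A_0^k$ lies in $H^\iy$, while $R$ has the absolutely convergent two-sided Fourier series read off from \eqref{eqR}, with coefficients $R_j=CA^{j-1}\ga$ for $j\geq 1$ and $R_{-j}=\ga^*A^{*(j-1)}C^*$ for $j\geq 1$. For $i\geq 0$ the $z^i$-coefficient of the left-hand side of \eqref{idR4} is $\sum_{k\geq 0}R_{i-k}C_0A_0^k$, which is exactly the $i$-th block of $T_RW_0$; the $z^i$-coefficient of the right-hand side is $CA^i$ (the summand $\ga^*(zI_n-A^*)^{-1}Q$ contributes only strictly negative Fourier indices), which is the $i$-th block of $W_{obs}$. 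Hence $T_RW_0=W_{obs}$, that is, $T_R^{-1}W_{obs}=W_0$. Finally, since $T_R$ is strictly positive by Lemma \ref{lempos1}, the operator $T_R^{-1}$ is a bijection of $\ell_+^2(\BC^m)$, and since $W_{obs}$ is injective by the observability assumption on the realization \eqref{realGK}, the identity $W_0=T_R^{-1}W_{obs}$ forces $W_0$ to be injective.
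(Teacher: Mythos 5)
Your proposal is correct and follows essentially the same route as the paper: the identity \eqref{idR4} is verified by the same algebra (the cancellation via $A-A_0=\ga C_0$, the Stein equation $Q-A^*QA_0=C^*C_0$, and the relation $C=R_0C_0+\ga^*QA_0$, which is exactly the paper's $-C+R_0C_0+\ga^*QA_0=0$), and the operator identity and injectivity are then read off as in the paper, with your explicit Fourier-coefficient comparison being just a spelled-out version of the paper's analytic/anti-analytic splitting argument. The only difference is the order of presentation, which is immaterial.
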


\begin{proof}[\bf Proof]
Let us assume that \eqref{idR4} has been proved. Note that the matrix function $C(I_n-zA)^{-1}$ is a stable rational function, while $\ga^*(zI_n-A^*)^{-1}Q$ is a rational function which is analytic on the exterior of the open unit disc and has the value zero at infinity. But then \eqref{idR4}  implies that $T_RW_0x$ is equal to  $W_{obs}x$ for each $x\in \BC^n$. Since $T_R$ is invertible, we get $W_0 = T_R^{-1} W_{obs}$. Recall that  $W_{obs}$ is one to one. Therefore $W_0 = T_R^{-1} W_{obs}$ is also one to one.

It remains to prove \eqref{idR4}. To do this we use the realization \eqref{eqR}. Using $\ga C_0=A-A_0$,  a standard calculation shows that
\[
zC(I_n-zA)^{-1}\ga C_0(I_n-zA_0)^{-1}=C(I_n-zA)^{-1} - C(I_n-zA_0)^{-1}.
\]
Analogously, using the Stein equation \eqref{Steineq4}, one computes that
\begin{align*}
&\ga^*(zI_n-A^*)^{-1}C^*C_0(I_n-zA_0)^{-1}=\ga^*(zI_n-A^*)^{-1}Q+\\
&\hspace{6cm}+\ga^*QA_0(I_n-zA_0)^{-1}.
\end{align*}
Using the realization \eqref{eqR} the two preceding identities yield
\begin{align*}
R(z)C_0(I_n-zA_0)^{-1}&= C(I_n-zA)^{-1}+\ga^*(zI_n-A^*)^{-1}Q+\\
&\hspace{2cm}+ (-C+R_0C_0+\ga^*QA_0)(I_n-zA_0)^{-1}.
\end{align*}
Next  using the two identities in \eqref{defAC04}  we see that
\begin{align*}
-C+R_0C_0+\ga^*QA_0&= -C+R_0C_0+\ga^*QA-\ga^*Q\ga C_0\\
&=-(C-\ga^*QA)+(R_0-\ga^*Q\ga) C_0\\
&= -(C-\ga^*QA)+\de C_0=0.
\end{align*}
This proves \eqref{idR4}.
\end{proof}

\begin{proof}[\bf Proof of Theorem \ref{mainthm}]
In the course of this proof we shall often use the following identity (which follows from \eqref{QTRinv} and Lemma \ref{lemW0}):
\begin{equation}\label{w0w}
W_{obs}^*W_0 =Q.
\end{equation}

\paragraph{The Schur complement for $T_R$.}
From the realization  \eqref{eqR} it follows  that $T_R$ admits a  block $2\ts2$ matrix representation
\[
T_R = \begin{bmatrix}
        R_0            &    \Gamma^* W_{obs}^*\\
        W_{obs} \Gamma & T_R \\
      \end{bmatrix}\mbox{ on } \begin{bmatrix}
        \mathbb{C}^m\\
        \ell_+^2(\mathbb{C}^m)\\
      \end{bmatrix}.
\]
Since $T_R$  is invertible, the \emph{Schur complement} with respect to the $(2,2)$ entry  is given by
\[
R_0 - \Gamma^* W_{obs}^* T_R^{-1}  W_{obs} \Gamma = R_0 - \Gamma^* Q \Gamma=\de.
\]
It follows (see. e.g., \cite[page 29]{BGKR08}) that the inverse of $T_R$ is given by
\begin{align*}\label{schurinv}
T_R^{-1} = \begin{bmatrix}
       {\de^{-1}}                            &   -{\de^{-1}} \Gamma^* W_{obs}^*T_R^{-1}\\
        - T_R^{-1} W_{obs}\Gamma {\de^{-1}}  & T_R^{-1} + T_R^{-1}W_{obs}\Gamma{\de^{-1}} \Gamma^* W_{obs}^*T_R^{-1}\\
      \end{bmatrix}\mbox{ on } \begin{bmatrix}
        \mathbb{C}^m\\
        \ell_+^2(\mathbb{C}^m)\\
      \end{bmatrix}.
\end{align*}

\paragraph{A matrix representation for $\Xi$.}
To compute a finite dimensional  realization for our central (maximum entropy) solution $X$ in \eqref{realx}, we need a formula for $\Xi$ involving the state space data. Note that \eqref{realGK} is equivalent to the following two realizations:
\begin{equation}\label{reprGK5}
G(z) =  D_1  + z C(I_n -  z A)^{-1}B_1, \quad
K(z) =  D_2  + z C(I_n -  z A)^{-1}B_2.
\end{equation}
Using  the realization for $K$ in \eqref{reprGK5} and \eqref{defOm}, we obtain
\begin{align*}
\Xi &=  (T_GT_G^* - T_KT_K^*)^{-1}T_KE_q\\
&= T_R^{-1}T_KE_q + T_R^{-1}W_{obs}\om W_{obs}^* T_R^{-1}T_KE_q \qquad \mbox{[by \eqref{defOm}]}\\
&= T_R^{-1}T_KE_q + W_0\om W_0^* T_KE_q\qquad \mbox{[by Lemma \ref{lemW0}]}\\
&= T_R^{-1}T_KE_q +  W_{0}\om \begin{bmatrix}
                                     C_0^*  & A_0^* W_0^* \\
                                   \end{bmatrix}\begin{bmatrix}
                                     D_2  \\ W_{obs}B_2  \\
                                   \end{bmatrix}\\
&=T_R^{-1}T_KE_q +  W_{0}\om \left( C_0^* D_2 + A_0^* Q B_2\right) \qquad \mbox{[by \eqref{w0w}]}\\
&=T_R^{-1}T_KE_q +  W_{0}\om C_2^* \qquad \mbox{[with $C_2$ as in \eqref{defC12}]}.
\end{align*}
To compute $T_R^{-1}T_KE_q$, we use the $2\ts 2$ operator matrix representation of $T_R^{-1}$ given above. This yields
\begin{align*}
T_R^{-1} T_K E_q &=\begin{bmatrix}
       {\de^{-1}}                            &   -{\de^{-1}} \Gamma^* W_{obs}^*T_R^{-1}\\
        - T_R^{-1} W_{obs}\Gamma {\de^{-1}}  & T_R^{-1} + T_R^{-1}W_{obs}\Gamma{\de^{-1}} \Gamma^* W_{obs}^*T_R^{-1}\\
      \end{bmatrix} \begin{bmatrix}
                      D_2 \\
                      W_{obs}B_2 \\
                    \end{bmatrix}\\
&= \begin{bmatrix}
{\de^{-1}} D_2 - {\de^{-1}}\Gamma^*Q B_2\\
W_0 \left(B_2   + \Gamma{\de^{-1}} \Gamma^* Q B_2- \Gamma{\de^{-1}} D_2 \right) \\
\end{bmatrix}.
\end{align*}
Substituting this into our previous formula for $\Xi$ we arrive at
\begin{align}
\Xi &= \begin{bmatrix}
         {\de^{-1}} D_2 - {\de^{-1}}\Gamma^*Q B_2 + C_0\om C_2^*\\
         W_0\left(B_2   + \Gamma{\de^{-1}} \Gamma^* Q B_2- \Gamma{\de^{-1}} D_2 +  A_0 \om C_2^*\right) \\
       \end{bmatrix}
       =\begin{bmatrix}
         {D_0}\\
         W_0B_0 \\
       \end{bmatrix}.
       \label{Xi1}
%     & =\begin{bmatrix}
%         {D_0}\\
%         W_0B_0 \\
%       \end{bmatrix}. \label{Xi1}
\end{align}
See Theorem \ref{mainthm}  for the definitions of ${D_0}$ and $B_0$.
Finally,  note that the identity \eqref{Xi1} also shows that $S_m^* \Xi = W_0 B_0$.\smallskip

\paragraph{The state space operator $F$.}
The state space operator
\begin{equation}\label{F}
F = S_m^* - S_m^* \Xi D_V^{-1} E_q^* T_K^* = S_m^* - W_0 B_0 D_V^{-1} E_q^* T_K^*.
\end{equation}
The state space realizations for $G$ and $K$ in \eqref{reprGK5} yield
\begin{align}\label{c1c2}
E_p^* T_G^* W_0 &= \begin{bmatrix}
                   D_1^* & B_1^* W_{obs}^* \\
                 \end{bmatrix}\begin{bmatrix}
                   C_0 \\ W_0 A_0\\
                 \end{bmatrix} = D_1^* C_0 + B_1^* Q A_0 = C_1,\nonumber\\
E_p^* T_K^* W_0 &= \begin{bmatrix}
                   D_2^* & B_2^* W_{obs}^* \\
                 \end{bmatrix}\begin{bmatrix}
                   C_0 \\ W_0 A_0\\
                 \end{bmatrix} = D_2^* C_0 + B_2^* Q A_0 = C_2.
\end{align}
Using the definition of $C_2$ with $S_m^* W_0 = W_0 A_0$,  we obtain
\begin{align*}
F W_0 &= S_m^*W_0 - W_0 B_0 D_V^{-1} E_q T_K^* W_0
=W_0 A_0 - W_0 B_0 D_V^{-1} C_2.
\end{align*}
This leads to the following intertwining relation:
\begin{equation}\label{Fw}
F W_0 = W_0 A^\times
\quad \mbox{where}\quad A^\times = A_0 -  B_0 D_V^{-1} C_2.
\end{equation}
This readily implies that
\begin{equation}\label{Fw1}
(I - z F)^{-1} S_m^* \Xi = (I - z F)^{-1} W_0 B_0=
W_0 (I_n - z A^\times)^{-1}B_0.
\end{equation}
Substituting the previous formulas into our state
space formula for $X$ in \eqref{realx}, we obtain
\begin{align}\label{realxF}
X(z) &= D_U D_V^{-1} +
 z\left(E_p^* T_G^* - D_U D_V^{-1} E_q^* T_K^* \right)  (I - z F)^{-1} S_m^* \Xi  D_V^{-1}\nonumber\\
%  &= D_U D_V^{-1} +
% z\left(E_p^* T_G^* - D_U D_V^{-1} E_q^* T_K^* \right)  (I - z F)^{-1}W_0 B_0D_V^{-1}\nonumber\\
  &= D_U D_V^{-1} +
 z\left(E_p^* T_G^* - D_U D_V^{-1} E_q^* T_K^* \right)W_0  (I_n - z A^\times)^{-1}B_0 D_V^{-1}\nonumber\\
 &= D_U D_V^{-1} +
 z\left(C_1 - D_U D_V^{-1}C_2\right) (I_n - z A^\times)^{-1}B_0 D_V^{-1}.
 \end{align}

\paragraph{Computing $D_U$ and $D_V$.}
To complete our finite dimensional state space realization formula for $X$ in \eqref{realx0}, we need an
expression for $D_U$ and $D_V$, that is,
\begin{align*}
D_U &= E_p^* T_G^*  \Xi = \begin{bmatrix}
                           D_1^* & B_1^* W_{obs}^*\\
                         \end{bmatrix} \begin{bmatrix}
                           {D_0} \\ W_0 B_0\\
                         \end{bmatrix} = D_1^* {D_0} + B_1^* Q B_0\\
 D_V &= I_q + E_q^* T_K^*  \Xi = I_q  +  \begin{bmatrix}
                           D_2^* & B_2^* W_{obs}^*\\
                         \end{bmatrix} \begin{bmatrix}
                           {D_0} \\ W_0 B_0\\
                         \end{bmatrix} = I+ D_2^* {D_0} + B_2^* Q B_0.
\end{align*}
This together with \eqref{realxF} yields the finite dimensional state space formula for the maximum entropy solution $X$ in \eqref{realx0}.

It is noted that $V(0) = D_V$. So  the entropy $\mathcal{E}(X) = - \ln \det[D_V]$;
see \eqref{entclt}.\smallskip

\paragraph{The function $V$ is invertible outer and $A^\ts$ is stable.}
Recall that $S_m^* \Xi = W_0 B_0$. Using this with $S_m^* W_0 = W_0 A_0$  and
the state space realization for $V$ in \eqref{realV}, we obtain
\begin{align*}
V(z) &=  D_V + z E_q^* T_K^*  (I - z S_m^*)^{-1}  S_m^* \Xi\\
&=D_V + z E_q^* T_K^*  (I - z S_m^*)^{-1}  W_0 B_0\\
 &=D_V + z E_q^* T_K^*W_0  (I_n - z A_0)^{-1} B_0.
\end{align*}
Since  $C_2 = E_q^* T_K^* W_0$,  a finite dimensional realization for
$V$ is given by
\begin{equation}\label{realV5}
V(z) = D_V + z C_2  (I_n - z A_0)^{-1} B_0.
\end{equation}
Since $A_0$ is stable, the function $V$ is a stable rational matrix function. In particular, $V$ belongs to $H_{q\ts q}^\iy $. From Theorem \ref{thcl} we know that $V^{-1}$ belongs to $H_{q\ts q}^\iy $. Thus both $V$ and $V^{-1}$ are in  $H_{q\ts q}^\iy $, and so $V$ is invertible outer.

Next we prove that $A^\ts$ is stable.  By employing a standard state space inversion formula, the  inverse for $V$ given by
\[
V(z)^{-1} = D_V^{-1} - z D_V^{-1} C_2 (I_n - z  A^\times)^{-1} B_0D_V^{-1}
\]
where $A^\times = A_0 - B_0 D_V^{-1} C_2$. We already  know that $V^{-1}$ belongs to $H_{q\ts q}^\iy $.  Hence   $V^{-1}$ is also a stable rational matrix function. Because $A_0$ is stable and $V(z)^{-1}$ is analytic in the closed unit disc, Theorem 2.1  in \cite{BGKR08}  tells us that $A^\ts$ is stable.\smallskip

\paragraph{The  solution $X$ is strictly contractive.} It remains to show that $\|X\|_\iy<1$. From Theorem \ref{thcl} we know that $\tht=V(0)V^{-1}$ is the outer spectral factor of $I-X^*X$. However, as proved in the preceding paragraph, the function  $V^{-1}$ is invertible outer. Hence  $\tht$ is invertible outer.  The latter implies that $I-X^*(\z)X(\z)$ is strictly positive for each $\z\in \BT$. Therefore $\|X(\z)\|<1$ for $\z\in \BT$.  Thus  $X$ is a strictly contractive solution to our Leech problem.
\end{proof}

%:new remark 5.2
\begin{remark}For later purposes (see the next remark) we mention that $U$ is given by the following finite dimensional realization:
\begin{equation}\label{realU5}
U(z) = D_U + z C_1  (I_n - z A_0)^{-1} B_0.
\end{equation}
The proof is similar to the proof of the realization of $V$ in \eqref{realV5}. Indeed, using    $S_m^* W_0 = W_0 A_0$  with
the state space realization for $U$ in \eqref{realU}, we obtain
\begin{align*}
U(z) &=  D_U + z E_p^* T_G^*  (I - z S_m^*)^{-1}  S_m^* \Xi\\
&=D_U + z E_p^* T_G^*  (I - z S_m^*)^{-1}  W_0 B_0\\
 &=D_U + z E_p^* T_G^*W_0  (I_n - z A_0)^{-1} B_0.
\end{align*}
\end{remark}

%:new remark 5.3
\begin{remark}\label{rem53} Given the various matrices appearing in Theorems~\ref{thmpos} and \ref{mainthm} one can now also prove directly that the function $X=UV^{-1}$ given by \eqref{realx0} satisfies $GX=K$, independent of the operator theory result based on the commutant lifting theorem. To illustrate this we give a direct proof of the identity  $GU=KV$, using the realizations of $U$ and $V$ given by \eqref{realU5} and \eqref{realV5}, respectively.  The direct proof requires a number of non-trivial identities which are given by the following lemma.
\end{remark}

\begin{lem}\label{lem-id}
Let $\begin{bmatrix}  G & K \\  \end{bmatrix}$ be given by  the observable  stable realization in \eqref{realGK}, and assume that items $(i)$ and $(ii)$ in \textup{Theorem \ref{thmpos}} are satisfied. Define $\om_0=I+(P_2- P_1)Q $.  Then the following identities hold:
\begin{align}
&B_1C_1  - B_2C_2 =A\om_0 - \om_0A_{0} \label{BC1},\\
& D_1C_1  -  D_2C_2   = C\om_0, \label{DC1}\\
&B_1 D_U - B_2 D_V  =  -\om_0 B_0, \label{b1dub2dv}\\
&D_1 D_U - D_2 D_V =   0.  \label{d1dud2dv}
\end{align}
Here $C_1$ and $C_2$ are given by \eqref{defC12}, and the matrices $A_{0}$ and $C_{0}$ are  given by \eqref{defAC04}.
\end{lem}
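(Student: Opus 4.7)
The plan is to prove the four identities by a common computational pattern, exploiting the Stein form of the Riccati equation and the definitions of $R_0$ and $\Gamma$. First I would record two preliminary identities that will be used throughout:
\begin{equation*}
C^*C_0 + A^*QA_0 = Q \quad \text{and} \quad R_0 C_0 + \Gamma^* Q A_0 = C.
\end{equation*}
The first is just a rewriting of the Stein form \eqref{Steineq4} of the Riccati equation, since $C^*C_0 = Q - A^*QA_0$. The second follows by writing $R_0 C_0 = \delta C_0 + \Gamma^*Q\Gamma C_0 = (C - \Gamma^* QA) + \Gamma^* Q(A-A_0) = C - \Gamma^* Q A_0$ and adding $\Gamma^* Q A_0$.

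For \eqref{DC1} and \eqref{BC1} I would expand using $C_j = D_j^*C_0 + B_j^*QA_0$ to get
\begin{equation*}
\binom{D_1}{B_1} C_1 - \binom{D_2}{B_2} C_2 = \Big(\tbinom{D_1 D_1^* - D_2 D_2^*}{B_1 D_1^* - B_2 D_2^*}\Big)C_0 + \Big(\tbinom{D_1 B_1^* - D_2 B_2^*}{B_1 B_1^* - B_2 B_2^*}\Big)QA_0,
\end{equation*}
and then substitute the identities $D_1D_1^*-D_2D_2^* = R_0 - C(P_1-P_2)C^*$, $B_1D_1^* - B_2D_2^* = \Gamma - A(P_1-P_2)C^*$, $B_1B_1^*-B_2B_2^* = (P_1-P_2) - A(P_1-P_2)A^*$ (the last coming from the Stein equations \eqref{p1p2}), and the adjoints of the first two. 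After collecting the $(P_1-P_2)$-terms, the combination $C^*C_0 + A^*QA_0$ appears and collapses to $Q$ by the preliminary identity. What remains for \eqref{DC1} is $R_0 C_0 + \Gamma^* Q A_0 - C(P_1-P_2)Q = C - C(P_1-P_2)Q = C\om_0$, using the second preliminary. For \eqref{BC1} one similarly gets $\Gamma C_0 + (P_1-P_2)QA_0 - A(P_1-P_2)Q$, and $\Gamma C_0 = A-A_0$ rearranges this to $A\om_0 - \om_0 A_0$.

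The same pattern drives \eqref{d1dud2dv} and \eqref{b1dub2dv}, with $(C_0,A_0)$ replaced by $(D_0,B_0)$ and an extra $\pm D_2$ or $\pm B_2$ term coming from the identity summand in $D_V$. The key intermediate step is to compute, with $\mu = \delta^{-1}(D_2 - \Gamma^*QB_2)$:
\begin{equation*}
R_0 D_0 + \Gamma^* Q B_0 = D_2 + C\om C_2^*, \qquad C^* D_0 + A^* Q B_0 = (\om_0^*)^{-1} C_2^*.
\end{equation*}
The first uses $(R_0 - \Gamma^* Q\Gamma)\mu = \delta\mu = D_2 - \Gamma^* Q B_2$ together with $R_0 C_0 + \Gamma^* Q A_0 = C$. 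The second uses $C^*C_0+A^*QA_0 = Q$, the relation $C^* - A^*Q\Gamma = C_0^*\delta$, and the identity $A^* - A_0^* = C_0^*\Gamma^*$ to collapse the $B_2$-terms into $C_2^*$; the resulting $(I+Q\om)C_2^*$ is rewritten via the algebraic identity $I + Q\om = (\om_0^*)^{-1}$, which follows from $\om = (P_1-P_2)(\om_0^*)^{-1}$ by a direct Neumann-type manipulation $I + N(I-N)^{-1} = (I-N)^{-1}$ with $N = Q(P_1-P_2)$. Plugging everything back into the expansion of $D_1 D_U - D_2 D_V$ leaves the combination $C\bigl(\om - (P_1-P_2)(\om_0^*)^{-1}\bigr)C_2^*$, which vanishes by the very definition of $\om$. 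The analogous computation for $B_1 D_U - B_2 D_V$ produces $-(I-(P_1-P_2)Q)B_0 + A\bigl(\om - (P_1-P_2)(\om_0^*)^{-1}\bigr)C_2^* = -\om_0 B_0$.

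The main obstacle is bookkeeping: several cancellations depend on recognizing $C^*C_0 + A^*QA_0 = Q$, $A - A_0 = \Gamma C_0$, and $\om \om_0^* = P_1-P_2$ inside long expressions, and on the slightly subtle algebraic identity $I + Q\om = (\om_0^*)^{-1}$ linking $\om$ to $\om_0$. Once these three relations are isolated at the start, each identity reduces to routine substitution.
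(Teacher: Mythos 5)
Your proposal is correct and follows essentially the same route as the paper: the same Stein-equation consequences ($C^*C_0+A^*QA_0=Q$, $Q-A^*QA=C_0^*\de C_0$), the same identity $B_0=B_2-\ga D_0+A\om C_2^*$, the same key intermediate $C^*D_0+A^*QB_0=(I+Q\om)C_2^*$, and the same vanishing combination $\om-(P_1-P_2)(I+Q\om)=0$ (the paper's \eqref{fundeq05}). Your packaging of the preliminaries $R_0C_0+\Gamma^*QA_0=C$ and $R_0D_0+\Gamma^*QB_0=D_2+C\om C_2^*$ is a mildly tidier organization of the same substitutions, but not a different argument.
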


For the moment let  us assume that the above  identities are proved, and let us consider
$G(z)U(z)-K(z)V(z)$. Using the realizations in  \eqref{reprGK5}, \eqref{realU5}, and \eqref{realV5}, we see that
\begin{align*}
&G(z)U(z)-K(z)V(z)=(D_1 D_U-D_2 D_V) +\\
&\hspace{2cm}+zC(I_n-zA)^{-1}(B_1 D_U-B_2 D_V) \\
&\hspace{2cm}+z(D_1 C_1-D_2 C_2)(I_n-zA_0)^{-1}B_0\\
&\hspace{2cm}+zC(I_n-zA)^{-1}(zB_1 C_1- zB_2 C_2)(I_n-zA_0)^{-1}B_0.
\end{align*}
Now using the identity \eqref{BC1} we see that
\begin{align*}
& (I_n-zA)^{-1}(zB_1 C_1- zB_2 C_2)(I_n-zA_0)^{-1}=\\
&\hspace{1.5cm}=(I_n-zA)^{-1}(zA\om_0-z\om_0A_0)(I_n-zA_0)^{-1}\\
&\hspace{1.5cm}=(I_n-zA)^{-1}\big(\om_0(I_n-zA_0)-(I_n-zA)\om_0\big)(I_n-zA_0)^{-1}\\
&\hspace{1.5cm}= (I_n-zA)^{-1}\om_0-\om_0(I_n-zA_0)^{-1}.
\end{align*}
It follows that
\begin{align*}
&G(z)U(z)-K(z)V(z)=(D_1 D_U-D_2 D_V) +\\
&\hspace{2cm}+zC(I_n-zA)^{-1}(B_1 D_U-B_2 D_V+\om_0 B_0) \\
&\hspace{2cm}+z(D_1 C_1-D_2 C_2-C\om_0)(I_n-zA_0)^{-1}B_0.
\end{align*}
The identities \eqref{d1dud2dv}, \eqref{b1dub2dv}, and \eqref{DC1} then show that $G(z)U(z)-K(z)V(z)$ is identically equal to zero, that is, $GU=KV$.

\begin{proof}[\textbf{Proof of Lemma \ref{lem-id}}]
In the sequel we shall  use  the following two identities
\begin{equation}\label{stein2}
Q-A^*QA=C_0^*\de C_0 \ands Q -A^*Q A_{0}=C^*C_{0}.
\end{equation}
These identities  follow  by using  the definition of $A_{0}$ and $C_{0}$ together with the fact that $Q$ is a hermitian matrix satisfying \eqref{are}.

\paragraph{Proof of \eqref{BC1}.}
Using  $C_{j}= D_j^* C_0 + B_j^*Q A_0$ for $j = 1,2$ and
the second Stein equation in~\eqref{stein2}, we have
\begin{align*}
B_jC_{j}&=B_j D^*C_{0}+B_jB_j^*Q A_{0}=B_jD_j^*C_{0}+(P_j - AP_jA^*)Q A_{0}\quad [\mbox{by \eqref{p1p2}}]\\
&= B_j D_j^*C_{0}+P_jQ A_{0}-AP_jA^* Q A_{0}\\
&= B_jD_j^*C_{0}+P_jQ A_{0}-AP_j(Q -C^*C_{0})\quad \mbox{[by the second part of  \eqref{stein2}]}\\
&= (B_jD_j^*+AP_jC^*)C_{0}+P_jQ A_{0}-AP_jQ \quad (j=1,2).
\end{align*}
Taking differences we obtain:
\begin{align*}
B_1C_{1} - B_2C_{2} &=
\big (B_1D_1^*- B_2D_2^* +A(P_1- P_2)C^*\big)C_{0}+\\
 &\hspace{4cm} +(P_1- P_2)Q A_{0}-A(P_1 - P_2)Q\\
&= \ga C_{0}+(P_1- P_2)Q A_{0}-A(P_1 - P_2)Q\qquad \mbox{[by \eqref{defGa}]}\\
& =A-A_{0} +(P_1- P_2)Q A_{0}-A(P_1 - P_2)Q \qquad \mbox{[by \eqref{defAC04}]}\\
&=A\big(I  +(P_2- P_1) Q \big) - \big(I +(P_2- P_1)Q \big)A_{0}=A\Omega_0-\Omega_0A_0.
\end{align*}
Hence \eqref{BC1} holds.

\paragraph{Proof of \eqref{DC1}.} Again using  $C_{j}= D_j^* C_0 + B_j^*Q A_0$ for $j = 1,2$  we have
\begin{align*}
D_1 C_{1}- D_2 C_2&= \left(D_1D_1^* - D_2D_2^*\right)C_{0}   +  \left(D_1 B_1^* - D_2 B_2^*\right)Q A_{0}\\
&= \left(D_1D_1^* - D_2D_2^*\right)C_{0} +(\ga^* -C(P_1 - P_2) A^*)Q A_{0}\quad \mbox{[by \eqref{defGa}]}\\
&= \left(D_1D_1^* - D_2D_2^*\right) C_{0} +   \ga^*Q A_{0}-C(P_1 - P_2)A^*Q A_{0}\\
&=\left(D_1D_1^* - D_2D_2^*\right) C_{0} +\ga^*Q (A-\ga C_{0})+  \qquad \mbox{[by \eqref{defAC04}]}\\
&\hspace{3cm}-C(P_1 - P_2)(Q -C^*C_{0})  \qquad \mbox{[by \eqref{stein2}]}\\
&=\big(D_1D_1^* - D_2D_2^* +C(P_1 - P_2)C^*\big)C_{0}+\\
&\hspace{3cm} + \ga^*Q A-\ga^*Q \ga C_{0}-C(P_1 - P_2)Q \\
&=(R_0-\ga^*Q \ga)C_{0} +\ga^*Q A-C(P_1 - P_2)Q\qquad   \mbox{[by  \eqref{defR0}]}\\
&= C-\ga^*Q A+\ga^*Q A-C(P_1 - P_2)Q   \qquad  \mbox{[by
\eqref{defAC04}}]\\
&= C\big( I +(P_2- P_1) Q \big)=C\Omega_0.
\end{align*}
Thus \eqref{DC1} holds.

\paragraph{Proof of \eqref{b1dub2dv}.}
To establish \eqref{b1dub2dv} we use that $B_0 = B_2 - \Gamma D_0 + A \om C_2^*$.  This identity follows from
\begin{align}
B_0 &= B_2   - \ga\de^{-1}(D_2 - \ga^*Q B_2) +  A_0 \om C_2^* \nonumber\\
&= B_2   - \ga\de^{-1}(D_2 - \ga^*Q B_2) +  (A - \Gamma C_0)  \om C_2^*\nonumber\\
&= B_2   - \ga D_0 +  A  \om C_2^*. \label{b00}
\end{align}
Using  \eqref{defGa} we see that
\begin{align*}
&B_1 D_U - B_2 D_V +\Omega_0 B_0=\\
&\hspace{.5cm}= - B_2 + (B_1 D_1^* - B_2 D_2^*) D_0 +
(B_1 B_1^* - B_2 B_2^*) Q B_0+\Omega_0 B_0\\
&\hspace{.5cm}= - B_2 + \big(\Gamma + A(P_2 - P_1)C^*\big) D_0 + (B_1 B_1^* - B_2 B_2^*) Q B_0+\Omega_0 B_0\\
&\hspace{.5cm}= - B_2+\ga D_0+ A(P_2 - P_1)C^*D_0 + (B_1 B_1^* - B_2 B_2^*) Q B_0+\\
&\hspace{2cm}+\big(I+(P_2- P_1)Q\big)B_0\\
&\hspace{.5cm}=A  \om C_2^*   + A(P_2 - P_1)C^*D_0 + (B_1 B_1^* - B_2 B_2^*) Q B_0 +(P_2- P_1)QB_0,
\end{align*}
using \eqref{b00} in the last identity. Next we use  \eqref{p1p2}. This yields
\begin{align*}
&B_1 D_U - B_2 D_V +\big(I+(P_2- P_1)Q \big)B_0=\\
&\hspace{.5cm}= A  \om C_2^* +A(P_2 - P_1)C^*D_0 +\\
&\hspace{2cm}+ \big((P_1-P_2)-A(P_1-P_2)A^*\big)QB_0+(P_2- P_1)QB_0 \\
&\hspace{.5cm}= A \om C_2^* +  A(P_2 - P_1)(C^*D_0+A^*QB_0).
\end{align*}
We proceed by computing  $C^*D_0+A^*QB_0$. We have
\begin{align*}
C^*D_0+A^*QB_0&=C^*D_0+A^*QB_2-A^*Q\ga D_0+A^*QA\om C_2^* \quad [\mbox{by \eqref{b00}}]\\
&=(C^*-A^*Q\ga )D_0+A^*QB_2+A^*QA\om C_2^*\\
&=C_0^*\de D_0+A^*QB_2+A^*QA\om C_2^*.
\end{align*}
Now observe that $C_0^*\de D_0=C_0^*(D_2-\ga^*QB_2)+C_0^*\de C_0\om C_2^*$. Using this together with the first identity in \eqref{stein2} we obtain
\begin{align*}
C^*D_0+A^*QB_0&= C_0^*D_2 +(A^*-C_0^*\ga^*)QB_2+A^*QA\om C_2^*+C_0^*\de C_0\om C_2^*\\
&=C_0^*D_2 + A_0^*Q B_2+Q\om C_2^*\\
&=C_2^*+ Q\om C_2^*=(I_n+Q\om)C_2^*.
\end{align*}
Summarizing we have:
\[
B_1 D_U - B_2 D_V +\Omega_0 B_0=A \Big(\om   +  (P_2 - P_1)+ (P_2 - P_1)Q \om \Big)C_2^*.
\]
Now write $\om$ in \eqref{defOm} as $\om=-N(I+QN)^{-1}$, where $N=P_2-P_1$. We see that
\begin{align}
&\om  +  (P_2 - P_1)+ (P_2 - P_1)Q \om=\nonumber\\
&\hspace{1cm}=-N(I+QN)^{-1}+N+NQ\Big(-N(I+QN)^{-1}\Big)\nonumber\\
&\hspace{1cm}=-N(I+QN)^{-1}+N -N(QN+I-I)(I+QN)^{-1}\nonumber\\
 &\hspace{1cm}=-N(I+QN)^{-1}+N(I+QN)^{-1}=0.\label{fundeq05}
 \end{align}
Therefore we obtain \eqref{b1dub2dv}.

\paragraph{Proof of \eqref{d1dud2dv}.} To establish \eqref{d1dud2dv} notice that
\begin{align*}
&D_1 D_U - D_2 D_V =
D_1 \left(D_1^* {D_0} + B_1^* Q B_0\right) - D_2 - D_2\left(D_2^* {D_0} + B_2^* Q B_0\right)\\
&\hspace{.5cm}=\left(D_1D_1^* - D_2D_2^*\right)D_0 +\left(D_1B_1^* - D_2B_2^*\right)Q B_0 - D_2\\
&\hspace{.5cm}= \left(R_0 + C(P_2-P_1)C^*\right)D_0 - D_2+\,\hspace{3.8cm}  \mbox{[use  \eqref{defR0}]}\\
&\hspace{1cm}+ \left(\Gamma^* + C(P_2-P_1) A^*\right)(QB_2 - Q\Gamma D_0 + Q A \om C_2^*)\hspace{.4cm} \mbox{[use  \eqref{defGa} and \eqref{b00}]}\\
&\hspace{.5cm}=\Big(R_0 - \Gamma^* Q \Gamma+ C(P_2-P_1)(C^*- A^* Q \Gamma)\Big)D_0 +\\
&\hspace{1cm} + \Big(\Gamma^* Q + C(P_2-P_1) A^*Q\Big)\Big(B_2 +  A \om C_2^*\Big) - D_2\\
&\hspace{.5cm}= \Big(\Delta + C(P_2-P_1)C_0^*\Delta\Big)\Big( \Delta^{-1}(D_2 - \Gamma^*Q B_2) + C_0\om C_2^*\Big) +\\
&\hspace{1cm} + \Big(\Gamma^* Q + C(P_2-P_1) A^*Q\Big)\Big(B_2 +  A \om C_2^*\Big) - D_2\\
&\hspace{.5cm}= C(P_2-P_1)C_0^*D_2+
C(P_2-P_1)\Big(A^* - C_0^* \Gamma^*\Big)QB_2+\\
&\hspace{1cm} + \Big( \Delta C_0  + C(P_2-P_1)C_0^*\Delta C_0+ \Gamma^* QA + C(P_2-P_1) A^*QA\Big)\om C_2^*  \\
&\hspace{.5cm}=C(P_2-P_1)C_2^* +\\
&\hspace{1cm}+ \Big( \Delta C_0+ C(P_2-P_1)Q+ \Gamma^* QA \Big)\om C_2^* \quad \mbox{[by the first part of \eqref{stein2}]}\\
&\hspace{.5cm}= C \Big(P_2-P_1 + \om + (P_2-P_1)Q \om \Big) C_2^*=0,\  \mbox{[because of \eqref{fundeq05}].}
\end{align*}
Therefore \eqref{d1dud2dv} holds.
\end{proof}

As the identities in Lemma \ref{lem-id} show the matrices appearing in Theorems~\ref{thmpos} and \ref{mainthm}  have a lot of structure. As a further illustration of this fact we mention without proof  the following identity:
\[
C_{1}^*C_1-C_2^*C_{2} =  \Big( Q + Q   (P_2 - P_1) Q\Big)  - A_{0}^*\Big(Q  + Q   (P_2 - P_1) Q  \Big)A_{0}.
\]
See also \eqref{idR4}.
%:ref


\begin{thebibliography}{xx}


\bibitem{AL84}
W.F. Arnold III and A.J. Laub, Generalized eigenproblem algorithms and software for algebraic riccati equations, {\em Proc.\ IEEE} {\bf 72} (1984), 1746--1754.

\bibitem{BW11}
M. Bakony and H.J. Woerdeman, \emph{Matrix completions, moments, and sums of Hermitian squares}, Princeton Univ. Press, Princeton, 2011

\bibitem{BT98}
J.A. Ball and T.T. Trent, Unitary colligations, reproducing kernel Hilbert spaces, and Nevanlinna-Pick interpolation in several variables, {\em J. Funct.\ Anal.} {\bf 157} (1998), 1-–61.

\bibitem{BGKR08}
H. Bart, I. Gohberg, M.A. Kaashoek, and A.C.M. Ran, \emph{Factorization of matrix and operator functions: the state space method}, Oper.\ Theory Adv.\ Appl.\  \textbf{178}, Birkh\"auser Verlag, Basel, 2008.

\bibitem{BGKR2}
H. Bart, I. Gohberg, M.A. Kaashoek, and A.C.M. Ran, \emph{A state space approach to canonical factorization with applications}, Oper.\ Theory Adv.\ Appl.\ \textbf{200}, Birkh\"auser Verlag, Basel, 2010.

\bibitem{Carl62}
L. Carlson, Interpolation by bounded analytic functions and the corona problem, \emph{Ann.\ Math.} \textbf{76} (1962), 547--559.

\bibitem{CF03}
M.J. Corless and A.E. Frazho, \emph{Linear sytems and control}, Marcel Dekker, Inc., New York, 2003.

\bibitem{FF90}
C. Foias  and A.E. Frazho, \emph{The Commutant Lifting Approach to Interpolation Problems,} Oper.\ Theory Adv.\ Appl.\ \textbf{44}, Birkh\"{a}user-Verlag, Basel, 1990.

\bibitem{FFGK}
C. Foias, A.E. Frazho, I. Gohberg, and M.A. Kaashoek, \emph{Metric constrained interpolation, commutant lifting and systems}, Oper.\ Theory Adv.\ Appl.\ {\bf 100}, Birkh\"auser Verlag, Basel, 1998.

\bibitem{FB10}
A.E. Frazho and W. Bosri, \emph{An operator perspective on signals and systems}, Oper.\ Theory Adv.\ Appl.\  \textbf{204}, Birkh\"auser Verlag, Basel, 2010.

\bibitem{FtHK13}
A.E. Frazho, S. ter Horst, and M.A. Kaashoek, State space formulas for stable rational matrix solutions of a Leech problem, {\em Indag.\ Math.\ (N.S.)} {\bf 25} (2014), 250--274.

\bibitem{FKR1}
A.E. Frazho, M.A. Kaashoek, and A.C.M. Ran, The non-symmetric discrete algebraic Riccati equation and canonical factorization of rational matrix functions on the unit circle, \emph{{\em Integr.\ Equ.\ Oper.\ Theory}} \textbf{66} (2010), 215--229.


\bibitem{FKR2a}
A.E. Frazho, M.A. Kaashoek, and A.C.M. Ran, Right invertible multiplication operators and stable rational matrix solutions to an associate  Bezout  equation,  I: the least squares solution, \emph{{\em Integr.\ Equ.\ Oper.\ Theory}} \textbf{70} (2011), 395--418.

\bibitem{FKR2b}
A.E. Frazho, M.A. Kaashoek, and A.C.M. Ran, Right invertible multiplication operators and stable rational matrix solutions to an associate Bezout equation, II: Description of all solutions, \emph{Operators and Matrices} \textbf{6} (2012), 833--857.


\bibitem{Fuhr68}
P. Fuhrmann, On the corona theorem and its applications to spectral problems in Hilbert space, \emph{Trans.\ Amer.\ Math.\ Soc.} \textbf{132} (1968), 55--66.

%\bibitem{GGK2} I. Gohberg, S. Goldberg, and  M.A. Kaashoek, \emph{Classes of Linear Operators}, Volume I,  OT \textbf{63}, Birkh\"auser Verlag, Basel, 1993.

%\bibitem{GLR82} I.Gohberg,  P. Lancaster, and L. Rodman, \emph{Matrix polynomials}, Academic Press, New York, 1982.

\bibitem{tH13}
S. ter Horst, Rational matrix solutions to the Leech equation: The Ball-Trent approach revisited, \emph{ J. Math.\ Anal.\ Appl.} \textbf{408} (2013), 335--344.

%{\bibitem{Kailath80} T. Kailath, \emph{Linear systems}, Prentice-Hall, Inc., Englwoords Cliffs, N.J., USA, 1980.

%\bibitem{LR95} P. Lancaster and L. Rodman, \emph{Algebraic Riccati equations}, Clarendon Press, Oxford,  1995.

\bibitem{LchCom}
M.A. Kaashoek and J. Rovnyak, On the preceding paper by R. B. Leech, {\em Integr.\ Equ.\ Oper.\ Theory} {\bf 78} (2014), 75-–77.

\bibitem{Lch}
R.B. Leech, Factorization of analytic functions and operator inequalities, {\em Integr.\ Equ.\ Oper.\ Theory} {\bf 78} (2014), 71–-73.

%\bibitem{Nikol86} N.K. Nikol'skii, \emph{Treatise on the shift operator}, Grundlehren \textbf{273}, Springer Verlag, Berlin 1986.

\bibitem{RR85}
M. Rosenblum and J. Rovnyak, \emph{Hardy classes and operator theory}, Oxford Mathematical Monographs, Oxford Science Publications, The Clarendon Press, Oxford University Press, New York, 1985.

%\bibitem{Peller03} V.V. Peller, \emph{Hankel Operators and their Applications}, Springer  Monographs in Mathematics, Springer 2003.

%\bibitem{Tol81} V.A. Tolokonnikkov, Estimates in  Carlson's corona theorem. Ideals of the algebra $H^\iy$, the problem of Szekefalvi-Nagy, \emph{Zap. Nau\v{c}n. Sem. Leningrad. Otdel. Mat. Inst. Steklov.} (LOMI) \textbf{113} (1981), 178--1981 (Russian).

\bibitem{Trent13}
T.T.  Trent, A constructive proof of the Leech theorem for rational matrix functions, {\em Integr.\ Equ.\ Oper.\ Theory} \textbf{75} (2013), 39--48.

\bibitem{WB12}
S. Wahls, and H. Boche, Lower bounds on the infima in some $\sH_\infty$ optimization problems, {\em IEEE transactions on automatic control} {\bf 57} (2012), 788--793.

\bibitem{WBP09}
S. Wahls, H. Boche, and V. Pohl, Zero-forcing precoding for frequency selective MIMO channels with $H^\infty$ criterion and causality constraint, \emph{Signal Processing} {\bf 89} (2009), 1754--1761.

\end{thebibliography}
\end{document}